\newcommand{\rev}[1]{{#1}}
\title{An optimization approach for well-targeted transcranial direct current stimulation} 
\author{Sven Wagner \footnotemark[2] \and Martin Burger \footnotemark[3] \and Carsten H. Wolters \footnotemark[2]}
\begin{document}
\maketitle
\slugger{siap}{xxxx}{xx}{x}{x--x}

\renewcommand{\thefootnote}{\fnsymbol{footnote}}

\footnotetext[2]{Institute for Biomagnetism and Biosignalanalysis, Westf\"alische Wilhelms-Universit\"at M\"unster, Malmedyweg 15, D 48149 M\"unster}
\footnotetext[3]{Institute for Computational and Applied Mathematics, Westf\"alische Wilhelms-Universit\"at M\"unster, Einsteinstr. 62, D 48149 M\"unster}

\renewcommand{\thefootnote}{\arabic{footnote}}

\begin{abstract}
Transcranial direct current stimulation is a non-invasive brain stimulation technique which modifies neural excitability by providing weak currents through scalp electrodes. The aim 
of this study is to introduce and analyze a novel optimization method for safe and well-targeted multi-array tDCS. For optimization, we consider an optimal control problem for a Laplace equation with Neumann boundary conditions with control and point-wise gradient state constraints. 
\rev{We prove well-posedness results 
for the proposed methods} and provide computer simulation results in a highly realistic six-compartment geometry-adapted hexahedral head model. For discretization of the proposed minimization problem the finite element method is employed and the existence of at least one minimizer to the discretized optimization problem is shown. For numerical solution of the corresponding discretized problem we employ the alternating direction method of multipliers and comprehensively examine the cortical current flow field with regard to focality, target intensity and orientation. The numerical results reveal that the optimized current flow fields show significantly higher focality and, in most cases, higher directional agreement to the target vector in comparison to standard bipolar electrode montages.
\end{abstract}

\begin{keywords}
transcranial direct current stimulation, optimization, Neumann boundary control, gradient state constraint, sparsity optimal control, existence \end{keywords}

\begin{AMS}\end{AMS}

\pagestyle{myheadings}
\thispagestyle{plain}
\markboth{Sven Wagner, Martin Burger and Carsten H. Wolters}{An optimization approach for well-targeted transcranial direct current stimulation}

\section{Introduction}
Transcranial direct current stimulation (tDCS) is a non-in\-vasive, inexpensive and easy-to-perform brain stimulation technique which modifies neural excitability \cite{Nitsche2000}.
Changes in neural membrane potentials are induced in a polarity-dependent manner, for example, in a motor cortex study, anodal stimulation right over the motor cortex enhances cortical excitability, whereas cathodal stimulation inhibits it \cite{Nitsche2000}. Recently, tDCS has been applied successfully in the treatment of neurological and neuropsychiatric disorders such as epilepsy \cite{Fregni2006}, depression \cite{Boggio2007} and Alzheimer disease \cite{Ferrucci2008}. The effects of tDCS can be preserved for more than one hour after stimulation \cite{Nitsche2003}. \newline
The conventional strategy is to apply the current density $\sigma \nabla \Phi$ via two large electrodes, with the active electrode (anode) to be placed above the presumed 
target region and the reference electrode (cathode) far away from the target region \cite{Nitsche2000, Nitsche2003}. Accurate and detailed finite element (FE) head models have been created to investigate the induced current density distribution \cite{Wagner2014, Dmochowski2011, Sadleir2012}. While significant effects of stimulation as compared to sham were reported \cite{Boggio2007, Fregni2006, Nitsche2000}, computer simulation studies have revealed that the induced cortical current flow fields are very widespread with often strongest current density amplitudes in non-target brain regions \cite{Wagner2014, Sadleir2012}. It is therefore a matter of debate whether the effects of stimulation are driven by the target brain region or elicited by adjacent cortical lobes \cite{Sadleir2012}. \newline
In order to overcome the limitations of conventional bipolar electrode montages, algorithmic-based sensor optimization approaches were presented \cite{Dmochowski2011, Sadleir2012, Im2008, Ruffini2014}. Im and colleagues \cite{Im2008} searched for two electrode locations which generate maximal current flow towards a certain target direction. \rev{Ruffini and colleagues \cite{Ruffini2014} described a method for optimizing the configuration of multifocal tCS for stimulation of brain networks, represented by spatially extended cortical targets.} 
Dmochowski and colleagues \cite{Dmochowski2011} used a multi-channel array consisting of 64 fixed electrode locations to calculate optimized stimulation protocols for presumed target regions. They employed radial and tangential targets and reported that compared with conventional electrode montages their optimization approach achieved electric fields which exhibit simultaneously greater focality and target intensity at cortical targets using the same total current applied. 
\newline
While the optimization results are very promising and a pilot study using multi-array tDCS devices for rehabilitation after stroke has been conducted \cite{Dmochowski2013}, to our knowledge, there is currently no study providing an in-depth analysis of mathematical models for multi-array tDCS optimization.\newline 
Given a volume conductor model $\Omega$ with a fixed electrode arrangement and a target vector $\boldsymbol e$ in $\Omega_t$ with $\Omega_t \subset \Omega$ being the target cortical area, the optimization approach estimates an optimal applied current pattern at the fixed electrodes. A Laplace equation with inhomogeneous Neumann boundary conditions is used to calculate the induced current density distribution \cite{Wagner2014} which is to be controlled by the boundary condition to ensure safe and focused stimulation. Therefore, the optimization problem for tDCS is in the class of control problems with Neumann boundary conditions \cite{LionsBook}. \newline
\rev{In this paper, we modeled the electrodes with the point electrode model (PEM) 
\cite{Pursiainen2012, Agsten2015, Agsten2016}. They can, however, also be modeled using a complete electrode model (CEM) \cite{Pursiainen2012, Dannhauer2012,  Eichelbaum2014, Agsten2015, Agsten2016}. In \cite{Agsten2015, Agsten2016}, it has been shown, that CEM and PEM only lead to small differences which are mainly situated locally around the electrodes and are very small in the brain region. Based on these results, the application of PEM 
is expected to result in negligible differences to the CEM and should thus provide a sufficiently accurate modeling of the current density within the brain region. } \newline
This paper is organized as follows: In Section 2 we establish the mathematical model of tDCS and introduce the optimization problem for multi-channel tDCS. In Section 3 we show existence of at least one minimizer to the optimization problem and the FE method is used for numerical discretization. In Section 4 we derive the algorithm for sensor optimization. Section 5 provides optimization results in a highly-realistic six-compartment head model (skin, skull compacta, skull spongiosa, CSF, gray and white matter) with white matter anisotropy. Furthermore, a conclusion and outlook section is presented. 

\section{Mathematical Model of tDCS}
In order to calculate the current flow field induced by tDCS, the quasistatic approximation to Maxwell's equations is applied \cite{Wagner2014}. This yields the \textit{tDCS forward problem}
\begin{eqnarray*}
\nabla \cdot \sigma \nabla \Phi & = & 0 \qquad in \; \Omega\\
 \langle \sigma \nabla \Phi, \boldsymbol n \rangle & = & \boldsymbol I \qquad on \; \Gamma \subset \partial \Omega \\
 \Phi & = & 0 \qquad on \; \Gamma_D = \partial \Omega \setminus \Gamma
\end{eqnarray*}
with $\Phi$ being the electric potential, $\sigma \in (L^{\infty})^{3 \times 3}$ being an anisotropic conductivity tensor, $\boldsymbol I$ being the applied current pattern at the electrodes with non-zero values only at the electrode surfaces, $\boldsymbol n$ being the outward normal vector and $\Gamma$ being a part of the boundary of the domain $\Omega$. Furthermore, a Dirichlet boundary condition on the remaining part $\Gamma_D$ is used to ensure that a solution to the tDCS forward problem is unique. We assume all parts of the boundary to have nonzero measure and to be of Lipschitz regularity. We introduce the Sobolev space 
$$H_{\diamond}^{-\frac{1}{2}}(\Gamma) := \lbrace u \in H^{-\frac{1}{2}}(\Gamma) | \int_{\Gamma}\! u(s) \, \mathrm{d} \boldsymbol s = 0 \rbrace \subset H^{-\frac{1}{2}}(\Gamma)$$ with $H^{-\frac{1}{2}}(\Gamma)$ being the standard Sobolev space for Neumann boundary values on $\Gamma$. The precise definition of $H^{-\frac{1}{2}}(\Gamma)$ is to be the dual space of $H^{\frac{1}{2}}(\Gamma)$ consisting of the Dirichlet traces of $H^1$-functions in $\Omega$. The integral in the above definition has to be interpreted as a duality product with the constant function equal to one, which is in $H^{\frac{1}2}(\Gamma)$. The weak formulation of the forward problem is given by
$$
\int_\Omega \sigma \nabla \Phi \cdot \nabla \Psi~dx = \int_\Gamma {\boldsymbol I}~\Psi ~d\sigma,  
$$
for all test functions $\Psi \in H^1(\Omega)$ with vanishing Dirichlet trace on $\Gamma_D$. Note that the boundary integral on the right-hand side is only defined if ${\boldsymbol I} \in L^2(\Gamma)$, in general it is to be replaced by the duality product between  $H^{-\frac{1}{2}}(\Gamma)$ and $H^{\frac{1}{2}}(\Gamma). $
\newline
Under standard and naturally satisfied regularity assumptions, there exists a unique solution $\Phi \in H^1(\Omega)$ to the tDCS forward problem, which can be shown by the Lax-Milgram Lemma:
\begin{theorem}\label{thmwelldef}
Let $\sigma \in (L^{\infty})^{3 \times 3}$ with $\sigma \geq \sigma_0 I_{3 \times 3}$ and let $\boldsymbol I \in H_{\diamond}^{-\frac{1}{2}}(\Gamma)$. Then there exists a solution 
$\Phi \in H^1(\Omega)$ to the tDCS forward problem. The solution is unique if $\Gamma_D$ has positive measure. Moreover, the following norm estimate holds:
\begin{equation*}
 \left\| \Phi \right\|_{H^1} \leq C \left\| \boldsymbol I \right\|_{H^{-\frac{1}{2}}}
\end{equation*}
\end{theorem}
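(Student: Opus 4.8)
The plan is to recast the weak formulation as a variational problem on the Hilbert space $V := \{\, \Psi \in H^1(\Omega) : \Psi = 0 \text{ on } \Gamma_D \,\}$, normed by $\|\cdot\|_{H^1}$, and then to verify the three hypotheses of the Lax--Milgram Lemma for the bilinear form $a(\Phi,\Psi) := \int_\Omega \sigma \nabla \Phi \cdot \nabla \Psi \, dx$ and the linear functional $F(\Psi) := \langle \boldsymbol I , \Psi \rangle_{H^{-\frac{1}{2}}(\Gamma), H^{\frac{1}{2}}(\Gamma)}$. A preliminary step is to note that $V$ is a closed subspace of $H^1(\Omega)$, hence a Hilbert space in its own right, which follows from continuity of the trace operator $H^1(\Omega) \to H^{\frac{1}{2}}(\partial\Omega)$ and the Lipschitz regularity of the boundary.

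Boundedness of $a$ is immediate from $\sigma \in (L^\infty)^{3\times 3}$, since $|a(\Phi,\Psi)| \le \|\sigma\|_{L^\infty} \|\nabla\Phi\|_{L^2}\|\nabla\Psi\|_{L^2} \le \|\sigma\|_{L^\infty}\|\Phi\|_{H^1}\|\Psi\|_{H^1}$. For boundedness of $F$ I would use that, by the very definition of $H^{\frac{1}{2}}(\Gamma)$ as the space of restrictions to $\Gamma$ of Dirichlet traces of $H^1(\Omega)$-functions, the map $\Psi \mapsto \Psi|_\Gamma$ is bounded from $H^1(\Omega)$ onto $H^{\frac{1}{2}}(\Gamma)$; combined with the duality pairing this yields $|F(\Psi)| \le \|\boldsymbol I\|_{H^{-\frac{1}{2}}}\,\|\Psi|_\Gamma\|_{H^{\frac{1}{2}}(\Gamma)} \le C\,\|\boldsymbol I\|_{H^{-\frac{1}{2}}}\,\|\Psi\|_{H^1}$, so $F \in V'$ with $\|F\|_{V'} \le C\,\|\boldsymbol I\|_{H^{-\frac{1}{2}}}$.

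The core of the argument is coercivity of $a$ on $V$. From $\sigma \ge \sigma_0 I_{3\times 3}$ one gets $a(\Psi,\Psi) \ge \sigma_0 \|\nabla\Psi\|_{L^2}^2$, and when $\Gamma_D$ has positive surface measure the Poincaré--Friedrichs inequality $\|\Psi\|_{L^2(\Omega)} \le C_P \|\nabla\Psi\|_{L^2(\Omega)}$ holds for every $\Psi \in V$, whence $a(\Psi,\Psi) \ge c\,\|\Psi\|_{H^1}^2$ with $c = \sigma_0/(1+C_P^2)$. Lax--Milgram then gives a unique $\Phi \in V$ with $\|\Phi\|_{H^1} \le c^{-1}\|F\|_{V'} \le C\,\|\boldsymbol I\|_{H^{-\frac{1}{2}}}$, which is the asserted estimate together with uniqueness. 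I expect this Poincaré step, and specifically the degenerate case $|\Gamma_D| = 0$, to be the main subtlety: there $V = H^1(\Omega)$, the constants lie in the kernel of $a$, and one must instead work on the quotient $H^1(\Omega)/\mathbb{R}$ (equivalently on $\{\Psi : \int_\Omega \Psi \, dx = 0\}$), where the generalized Poincaré inequality restores coercivity — and here the hypothesis $\boldsymbol I \in H_{\diamond}^{-\frac{1}{2}}(\Gamma)$ is precisely what makes $F$ well defined on the quotient, since $F$ then annihilates constants, so Lax--Milgram still produces a solution, but only up to an additive constant (existence without uniqueness).

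Finally I would record that the variational solution is indeed a weak solution of the stated boundary value problem: testing with $\Psi \in C_c^\infty(\Omega) \subset V$ recovers $\nabla\cdot\sigma\nabla\Phi = 0$ in the distributional sense, after which testing with general $\Psi \in V$ and integrating by parts recovers the Neumann condition $\langle \sigma\nabla\Phi, \boldsymbol n\rangle = \boldsymbol I$ on $\Gamma$ interpreted in $H^{-\frac{1}{2}}(\Gamma)$, while the Dirichlet condition on $\Gamma_D$ is built into membership in $V$.
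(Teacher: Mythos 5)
Your proposal is correct and follows exactly the route the paper indicates: the paper itself offers no written-out proof beyond the remark that the result ``can be shown by the Lax--Milgram Lemma,'' and your argument is precisely the standard instantiation of that remark (closed subspace $V$, boundedness of $a$ and of $F$ via the trace/duality pairing, coercivity from $\sigma \geq \sigma_0 I_{3\times 3}$ and Poincar\'e--Friedrichs on $\Gamma_D$). Your additional treatment of the degenerate case $|\Gamma_D|=0$ via the quotient $H^1(\Omega)/\mathbb{R}$, where $\boldsymbol I \in H^{-\frac{1}{2}}_{\diamond}(\Gamma)$ supplies the compatibility condition, is a correct and welcome elaboration of why the theorem asserts existence in general but uniqueness only when $\Gamma_D$ has positive measure.
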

 \newline \noindent
For current density optimization, a multi-channel array consisting of 74 fixed electrode locations (the locations of an extended 10-10 EEG electrode system) is used and optimized applied current patterns are calculated for presumed targets. Figure \ref{Fig0} illustrates an overview of the optimization setup in a two-dimensional model. Optimally, the induced current density distribution is maximal in the target region $\Omega_t$ and zero in non-target regions $\Omega_r = \Omega \setminus \Omega_t$. Physically, a focal stimulation without stimulating non-target regions in not possible as the current has to flow through the volume to reach the target region. Therefore, the current density is restricted by $\epsilon > 0$ such that 
$$ |\sigma \nabla \Phi| \leq \epsilon \qquad \mbox{in~} \Omega_r. $$
 Secondly, the total current applied to all electrodes is limited to 2 mA, a commonly used safety \rev{criterion} \cite{Dmochowski2011}.
\begin{figure}[t!]
\begin{center}
\includegraphics*[width = 0.6\textwidth,keepaspectratio]{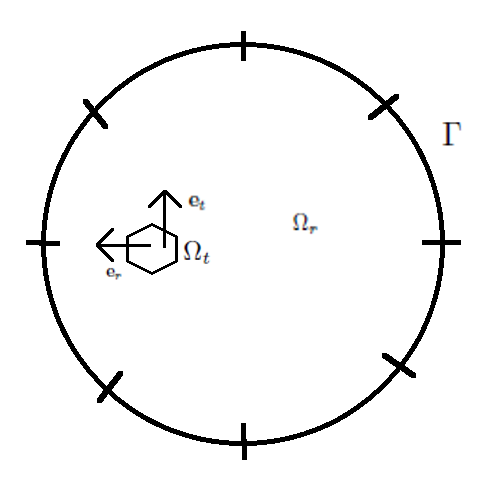}
\end{center}
\caption{Optimization setup in a two-dimensional model. The target region $\Omega_t$, a radial ($\boldsymbol e_r$) and tangential ($\boldsymbol e_t$) target vector and the boundary 
$\Gamma$ of the volume conductor model $\Omega$ are demonstrated. The electrodes are depicted with solid lines.}\label{Fig0}
\end{figure}\newline\noindent
For effective stimulation, the optimized current density distribution should be oriented perpendicularly to the cortical surface \cite{Bindman1962, Creutzfeldt1962}. Besides the correct target location, also the target direction is important as shown by Bindman and colleagues \cite{Bindman1962} and Creutzfeldt and colleagues \cite{Creutzfeldt1962} who were able to demonstrate in physical measurements in rats and cats, respectively, that the neural firing rate is strongly influenced by the direction of the field to the cortical surface. Perpendicularly-inwards (and parallel to the long apical dendrites of the large pyramidal cells in cortical layer V) stimulation, i.e. anodal stimulation, strongly enhanced the activity of the cortical neurons, whereas perpendicularly-outwards stimulation, i.e. cathodal stimulation, inhibited it. We thus maximize $\int_{\Omega_t}\! \langle \sigma \nabla \Phi, \boldsymbol e \rangle \, \mathrm{d} \boldsymbol x$ with $\boldsymbol e \subset \Omega_t$ being a \rev{unit} vector perpendicularly-inwards-oriented to the cortical surface \cite{Bindman1962, Creutzfeldt1962}. \newline
In the following we reformulate the current density constraint in the whole domain as
$\omega |\sigma \nabla \Phi| \leq \epsilon$ with a weight $\omega = 1$ in $\Omega_r$ and $0 < \omega \ll 1$ in $\Omega_t$. Thus, we consider the
constrained optimization problem 
\begin{eqnarray*}
  \mathbf{(P)} \qquad 
&- \int_{\Omega_t}\! \langle \sigma \nabla \Phi, \boldsymbol e \rangle \, \mathrm{d} \boldsymbol x& \; \rightarrow \min_{{\boldsymbol I} \in H^{-\frac{1}{2}}_{\diamond}(\Gamma)} \\ 
& \text{subject to } &
 \omega | \sigma \nabla \Phi | \leq \epsilon \\ 
 && \int_{\Gamma}\! |\boldsymbol I| \, \mathrm{d} \boldsymbol x \;  \leq 4 \\
 && \nabla \cdot \sigma \nabla \Phi  =  0 \qquad \quad in \; \Omega \\
 && \langle \sigma \nabla \Phi, \boldsymbol n \rangle  = \boldsymbol I \qquad \; on \;  \Gamma   \\
 && \Phi = 0 \qquad \qquad \qquad on \; \Gamma_D 
\end{eqnarray*}
which is a control problem with Neumann boundary conditions \cite{LionsBook}. Note that safety limitation on the inflow current is a \textit{control constraint}, 
while the limit on the current density outside the target region is a \textit{state constraint}. 
Also, the state constraint with $\omega > 0$ ensures that $\nabla \Phi$ is bounded in $L^{\infty}(\Omega)$ and due to the embedding theorem also bounded in $L^2(\Omega)$. This implies that $\Phi \in H^1(\Omega)$ and due to the trace theorem $\Phi|_\Gamma \in H^{\frac{1}{2}}(\Gamma)$. Since $\sigma \in (L^{\infty})^{3 \times 3}$ this implies that $ \langle \sigma \nabla \Phi, \boldsymbol n \rangle$ is bounded in $H^{-\frac{1}{2}}(\Gamma)$. \noindent
\begin{remark}
The constrained optimization problem $(\mathbf{P})$ is a rather challenging optimal control problem. Firstly, the current density distribution $\sigma \nabla \Phi$ is optimized, while many other applications require to optimize the potential field $\Phi$. Secondly, a point-wise gradient state constraint is used. While optimal control problems with state constraints were frequently used \cite{Hartl1995}, not much attention was given to gradient state constraints only until recently \cite{Schiela2011,wollner2,wollner1} focusing on distributed control in the domain however. Thirdly, we look for a control function $\boldsymbol I$ vanishing on large parts of the boundary, known as \textit{sparsity optimal control problem} \cite{Herzog2012}. Indeed, we rather look for $\boldsymbol I$ as a combination of concentrated measures rather than an $L^1$-function, so the above integral formulation has to be understood as formal notation for the total variation of the Radon measure identified with $\boldsymbol I$. This will be made precise in the next section.
\end{remark}\newline
Applying Gauss' Theorem to $\int_{\Omega_t}\! \langle \sigma \nabla \Phi, \boldsymbol e \rangle \, \mathrm{d} \boldsymbol x$ provides a further motivation for the proposed objective functional. Assuming $\sigma$ and $\boldsymbol e$ locally constant we have 
$$
 \int_{\Omega_t}\! \langle \sigma \nabla \Phi, \boldsymbol e \rangle \, \mathrm{d} \boldsymbol x +
 \int_{\Omega_t}\; \sigma \Phi \nabla \cdot \boldsymbol e + \boldsymbol e \nabla \sigma \Phi \, \mathrm{d} \boldsymbol x = 
 \int_{\Omega_t}\! \nabla \cdot (\sigma \Phi\boldsymbol e ) \, \mathrm{d} \boldsymbol x = 
 \int_{\partial \Omega_t}\! \langle \sigma \Phi \boldsymbol e , \boldsymbol n \rangle \mathrm{d}\sigma
$$
In order to maximize the current densities along the target direction, the potential should be maximized where the target vector $\boldsymbol e$ is oriented parallel to the outward normal vector $\boldsymbol n$ on $\partial \Omega_t$ and minimized where it is antiparallel.

\section{Optimal Control Problem}
The goal of this section is to provide insight into the existence theory of at least one minimizer to the tDCS optimization problem $\mathbf{(P)}$. Furthermore, we show 
existence of at least one minimizer to a simplified optimization problem without control constraint at the electrodes. Finally, the finite element method is used for numerical discretization of
the considered optimization procedure.

\subsection{Continuous formulation}
We now reconsider the optimization problem $\mathbf{(P)}$. In order to provide a rigorous formulation of the safety constraint we interpret $\boldsymbol I$ as a Radon measure in the space $\mathcal{M}(\Gamma)$, whose norm, i.e. the total variation of a measure, is an appropriate replacement for the $L^1$-norm.  The constraint then becomes
$$ \left\| \boldsymbol I \right\|_{\mathcal{M}(\Gamma)} \leq 4  $$
In order to obtain a unified formulation also allowing for other design constraints respectively a discrete electrode setup, we introduce a feasible set $\mathcal{D}(\Gamma) \subset  H_{\diamond}^{-\frac{1}{2}}(\Gamma)$.
To further simplify notation we define an operator $A$ via
$$ A: \quad  H_{\diamond}^{-\frac{1}{2}}(\Gamma) \rightarrow L^2(\Omega)^3, \quad    \boldsymbol I \mapsto \sigma \nabla \Phi. $$\noindent
Since $\boldsymbol I \in \mathcal{D}(\Gamma) \subset H_{\diamond}^{-\frac{1}{2}}(\Gamma)$, the boundary value problem for the Poisson equation has a unique solution $\Phi \in H^1(\Omega)$ bounded by a multiple of the norm of $\boldsymbol I$ as guaranteed by Theorem \ref{thmwelldef}. This implies that $A$ is a well-defined linear operator. Hence we can substitute $\sigma \nabla \Phi = A \boldsymbol I$ in $\mathbf{(P)}$, which leads to the following equivalent reformulation
\begin{eqnarray*}
  \mathbf{(P^2_{\delta})} \qquad 
&- \int_{\Omega_t}\! \langle A \boldsymbol I, \boldsymbol e \rangle \, \mathrm{d} \boldsymbol x& \rightarrow \min_{{\boldsymbol I} \in {\mathcal D}(\Gamma)} \\ 
& \text{subject to } &
 \omega | A \boldsymbol I |  \leq \delta \\ 
 && \left\| \boldsymbol I \right\|_{\mathcal{M}(\Gamma)} \;  \leq 4.
\end{eqnarray*}\noindent
The existence of at least one minimizer $\boldsymbol I \in \mathcal{D}(\Gamma)$ to $\mathbf{(P^2_{\delta})}$ is the topic of the following theorem.

\begin{lemma} \label{newlemma}
Let $\omega$ be bounded away from zero in $\Omega$ and let ${\boldsymbol I} \in  H_{\diamond}^{-\frac{1}{2}}(\Gamma)$ be such that $A {\boldsymbol I}$ satisfies the constraint
$\omega | A \boldsymbol I |  \leq \delta$ almost everywhere in $\Omega$. Then there exists a constant $C$ depending only on the given data $\sigma$, $\omega$, $\delta$, $\Omega$ and $\Gamma$, such that
\begin{equation}
	\Vert \boldsymbol I \Vert_{H^{-\frac{1}{2}}(\Gamma)} \leq C
\end{equation}
\end{lemma}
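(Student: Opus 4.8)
The plan is to derive an a~priori $L^2$-bound on the current density $\sigma\nabla\Phi$ directly from the state constraint, and then to read off the claimed bound on $\boldsymbol I$ from the fact that $\boldsymbol I$ is the normal trace of $\sigma\nabla\Phi$ on $\Gamma$.

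\emph{Step 1 (flux bound).} Since $\omega$ is bounded away from zero, I would fix $\omega_0>0$ with $\omega\ge\omega_0$ on $\Omega$; the constraint $\omega\,|A\boldsymbol I|\le\delta$ then immediately gives the pointwise bound $|\sigma\nabla\Phi|=|A\boldsymbol I|\le\delta/\omega_0$ almost everywhere in $\Omega$, hence
\begin{equation*}
\|\sigma\nabla\Phi\|_{L^2(\Omega)^3}\ \le\ \frac{\delta}{\omega_0}\,|\Omega|^{1/2}\ =:\ M .
\end{equation*}
Invoking in addition the ellipticity $\sigma\ge\sigma_0 I_{3\times3}$ one also obtains $|\nabla\Phi|\le\delta/(\sigma_0\omega_0)$ a.e., so $\nabla\Phi\in L^2(\Omega)^3$; combined with a Poincar\'e--Friedrichs inequality, available because $\Phi$ vanishes on $\Gamma_D$ which has positive measure, this would even bound $\Phi$ in $H^1(\Omega)$ --- but only the flux bound $M$ is needed below.

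\emph{Step 2 (bound on $\boldsymbol I$).} Because $\Phi$ solves $\nabla\cdot\sigma\nabla\Phi=0$, the field $\sigma\nabla\Phi$ lies in $H(\mathrm{div},\Omega)$ with $\|\sigma\nabla\Phi\|_{H(\mathrm{div},\Omega)}=\|\sigma\nabla\Phi\|_{L^2(\Omega)}\le M$. I would then invoke the continuity of the normal-trace operator $H(\mathrm{div},\Omega)\to H^{-1/2}(\partial\Omega)$, use the boundary condition $\langle\sigma\nabla\Phi,\boldsymbol n\rangle=\boldsymbol I$ on $\Gamma$ together with the weak formulation of the forward problem to identify the $\Gamma$-part of this normal trace with $\boldsymbol I$, and use that restriction of $H^{-1/2}(\partial\Omega)$-functionals to $\Gamma$ is continuous into $H^{-1/2}(\Gamma)$; this yields
\begin{equation*}
\|\boldsymbol I\|_{H^{-1/2}(\Gamma)}\ \le\ C_{\mathrm{tr}}\,\|\sigma\nabla\Phi\|_{L^2(\Omega)}\ \le\ C_{\mathrm{tr}}\,\frac{\delta}{\omega_0}\,|\Omega|^{1/2}\ =:\ C ,
\end{equation*}
with $C$ depending only on $\delta$, $\omega$ (through $\omega_0$), $\Omega$ and $\Gamma$, and on $\sigma$ only through the ellipticity needed for $A$ to be well defined via Theorem~\ref{thmwelldef}. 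An equivalent route avoiding $H(\mathrm{div},\Omega)$ is to test the weak formulation against arbitrary $\Psi\in H^1(\Omega)$ with vanishing Dirichlet trace on $\Gamma_D$, giving $\langle\boldsymbol I,\Psi|_\Gamma\rangle=\int_\Omega\sigma\nabla\Phi\cdot\nabla\Psi\,dx\le M\,\|\Psi\|_{H^1(\Omega)}$, and then to pass to the supremum over $\Psi$ ranging through the image of a bounded lifting of $H^{1/2}(\Gamma)$ adapted to the mixed boundary conditions.

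\emph{Expected main difficulty.} Step 1 is elementary; the real care is in Step 2, namely in making the identification of $\boldsymbol I$ with the $\Gamma$-restriction of the normal flux fully rigorous in the presence of \emph{mixed} Dirichlet/Neumann boundary conditions --- that is, handling the trace spaces $H^{\pm1/2}$ on the Lipschitz surface pieces $\Gamma$ and $\Gamma_D$ and their interaction along the common interface $\overline{\Gamma}\cap\overline{\Gamma_D}$, and verifying that the lifting used in the alternative argument is genuinely bounded. This is exactly the functional-analytic framework already underlying Theorem~\ref{thmwelldef}, and the standing assumptions that $\Gamma$ and $\Gamma_D$ are Lipschitz and of positive measure are what make the relevant restriction, normal-trace and lifting maps continuous, so I expect no obstruction beyond careful bookkeeping.
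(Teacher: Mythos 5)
Your proposal is correct and, at its core, is the same argument as the paper's: the ``equivalent route'' you sketch at the end of Step 2 --- testing the weak formulation against $\Psi\in H^1(\Omega)$ vanishing on $\Gamma_D$, estimating $\int_\Omega \sigma\nabla\Phi\cdot\nabla\Psi\,dx$ by Cauchy--Schwarz with the pointwise bound $|A\boldsymbol I|\le\delta/\inf\omega$, and taking the supremum over a bounded lifting of the unit ball of $H^{1/2}(\Gamma)$ --- is precisely the paper's proof, yielding the same constant $C\sim C_T\,|\Omega|^{1/2}\,\delta/\inf\omega$. The $H(\mathrm{div},\Omega)$ normal-trace packaging you put first is just a named version of that same duality estimate, and the care you flag about the mixed boundary conditions is handled in the paper simply by working from the outset with the dual characterization of $\Vert\cdot\Vert_{H^{-1/2}(\Gamma)}$ and test functions vanishing on $\Gamma_D$.
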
 
\begin{proof}
We have 
$$ 	\Vert \boldsymbol I \Vert_{H^{-\frac{1}{2}}(\Gamma)} = \sup \{ \langle \boldsymbol I, \psi \rangle ~|~ \Vert \psi \Vert_{H^{\frac{1}{2}}(\Gamma)} \leq 1\}.$$
By the trace theorem we can write each such $\psi$ as the Dirichlet trace of some $\Psi \in H^1(\Omega)$ with vanishing trace on $\Gamma_D$ and $\Vert \Psi \Vert_{H^1(\Omega)} \leq C_T$ for a universal constant in the trace theorem depending only on $\Omega$ and $\Gamma$. Hence, by the weak formulation of the elliptic partial differential equation satisfied by $\Phi$ with $A \boldsymbol I = \sigma \nabla \Phi$, we find
$$ 	\Vert \boldsymbol I \Vert_{H^{-\frac{1}{2}}(\Gamma)} \leq \sup \{ \int_\Omega (A\boldsymbol I) \cdot \nabla \Psi ~dx ~|~ \Vert \Psi \Vert_{H^1(\Omega)} \leq C_T  \}.$$
Using the Cauchy-Schwarz inequality and an elementary estimate of the $L^2$-norm of $A \boldsymbol I$ in terms of its supremum norm, we find
$$ \Vert \boldsymbol I \Vert_{H^{-\frac{1}{2}}(\Gamma)} \leq C = C_T \sqrt{|\Omega|~\frac{\delta}{\inf \omega}}, $$
and we see that $C$ only depends on $\sigma$, $\omega$, $\delta$, $\Omega$ and $\Gamma$.
\end{proof}\noindent

\begin{theorem}\label{minQE}
Let ${\mathcal D}(\Gamma)$ be weakly closed in $H_{\diamond}^{-\frac{1}{2}}(\Gamma)$, let $0 \in {\mathcal D}(\Gamma)$ and let $\epsilon > 0$. Then there exists at least one minimizer $\boldsymbol I \in \mathcal{D}(\Gamma)$ of the convex optimization problem $\mathbf{(P^2_{\delta})}$. 
\end{theorem}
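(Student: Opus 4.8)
The plan is to use the direct method of the calculus of variations.

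\emph{Step 1: well-posedness of the problem and a minimizing sequence.} I would first observe that $0\in\mathcal D(\Gamma)$ with $A0=0$ satisfies both constraints $\omega|A0|\le\delta$ and $\|0\|_{\mathcal M(\Gamma)}\le 4$, so the admissible set $\mathcal F\subset\mathcal D(\Gamma)$ is nonempty and the objective $J(\boldsymbol I):=-\int_{\Omega_t}\langle A\boldsymbol I,\boldsymbol e\rangle\,d\boldsymbol x$ has infimum $m\le J(0)=0$ over $\mathcal F$. Since $\omega$ is bounded away from zero in $\Omega$ (it equals $1$ on $\Omega_r$ and a fixed positive constant on $\Omega_t$), Lemma~\ref{newlemma} shows that every admissible $\boldsymbol I$ satisfies $\|\boldsymbol I\|_{H^{-\frac{1}{2}}(\Gamma)}\le C$ with $C$ depending only on the data, so $\mathcal F$ is bounded in $H_{\diamond}^{-\frac{1}{2}}(\Gamma)$. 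Because $A$ is bounded into $L^2(\Omega)^3$ and $\boldsymbol e$ is an $L^\infty$ unit vector field on $\Omega_t$, $J$ is a bounded linear functional on $H_{\diamond}^{-\frac{1}{2}}(\Gamma)$, hence $m>-\infty$. I would then pick a minimizing sequence $(\boldsymbol I_n)\subset\mathcal F$ and, using reflexivity of the Hilbert space $H_{\diamond}^{-\frac{1}{2}}(\Gamma)$ together with the uniform bound, extract a subsequence with $\boldsymbol I_n\rightharpoonup\boldsymbol I^\ast$ weakly in $H_{\diamond}^{-\frac{1}{2}}(\Gamma)$.

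\emph{Step 2: the weak limit is admissible.} Weak closedness of $\mathcal D(\Gamma)$ gives $\boldsymbol I^\ast\in\mathcal D(\Gamma)$. The operator $A$, being linear and bounded, is weakly continuous, so $A\boldsymbol I_n\rightharpoonup A\boldsymbol I^\ast$ in $L^2(\Omega)^3$; since $\{v\in L^2(\Omega)^3:\omega|v|\le\delta\text{ a.e.}\}$ is convex and strongly closed, hence weakly closed, and contains each $A\boldsymbol I_n$, I obtain $\omega|A\boldsymbol I^\ast|\le\delta$ a.e. For the control constraint I would test $\boldsymbol I^\ast$ against Lipschitz functions $\varphi$ on $\Gamma$ (which lie in $H^{\frac{1}{2}}(\Gamma)\cap C(\Gamma)$): weak convergence and $\|\boldsymbol I_n\|_{\mathcal M(\Gamma)}\le 4$ yield
$$ |\langle\boldsymbol I^\ast,\varphi\rangle| = \lim_{n\to\infty}\Big|\int_\Gamma\varphi\,d\boldsymbol I_n\Big| \le 4\,\|\varphi\|_{C(\Gamma)}; $$
since Lipschitz functions are dense in $C(\Gamma)$, the functional $\varphi\mapsto\langle\boldsymbol I^\ast,\varphi\rangle$ extends to a bounded linear functional on $C(\Gamma)$ of norm at most $4$, so by the Riesz representation theorem $\boldsymbol I^\ast$ is identified with a Radon measure with $\|\boldsymbol I^\ast\|_{\mathcal M(\Gamma)}\le 4$. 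Hence $\boldsymbol I^\ast\in\mathcal F$.

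\emph{Step 3: optimality.} Since $J$ is linear and bounded it is weakly continuous, so $J(\boldsymbol I^\ast)=\lim_n J(\boldsymbol I_n)=m$, and therefore $\boldsymbol I^\ast$ is a minimizer of $\mathbf{(P^2_{\delta})}$.

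The step I expect to be the main obstacle is passing the total-variation constraint $\|\boldsymbol I\|_{\mathcal M(\Gamma)}\le 4$ to the limit in Step 2: the minimizing sequence is controlled, and converges, only in the $H^{-\frac{1}{2}}(\Gamma)$-weak topology, which is a priori unrelated to the weak-$\ast$ topology of $\mathcal M(\Gamma)=C(\Gamma)^\ast$, so one cannot directly invoke weak-$\ast$ lower semicontinuity of the total variation; the bound must be recovered from the uniform total-variation estimate on $(\boldsymbol I_n)$ by duality with $C(\Gamma)$ and the density of Lipschitz (equivalently smooth) functions therein. The remaining ingredients — reflexivity and the resulting weak compactness, weak continuity of the solution operator $A$ and of the linear objective, and weak closedness of the convex pointwise $L^2$-constraint set — are routine.
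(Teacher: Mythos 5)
Your proposal is correct and follows the same direct-method skeleton as the paper's proof: feasibility of $\boldsymbol I = 0$, boundedness of the feasible set in $H_{\diamond}^{-\frac{1}{2}}(\Gamma)$ via Lemma~\ref{newlemma}, weak compactness, closedness of the constraint sets, and weak continuity of the linear objective. The one place you genuinely depart from the paper is the step you yourself flag as the obstacle: the total-variation constraint. The paper extracts a limit in the weak-star topology of $H_{\diamond}^{-\frac{1}{2}}(\Gamma)\cap\mathcal M(\Gamma)$ and invokes weak-star lower semicontinuity of the $\mathcal M(\Gamma)$-norm, which implicitly requires reconciling two weak topologies on the minimizing sequence. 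You instead work only with weak convergence in the Hilbert space $H_{\diamond}^{-\frac{1}{2}}(\Gamma)$ and recover the bound $\|\boldsymbol I^\ast\|_{\mathcal M(\Gamma)}\le 4$ a posteriori by testing against Lipschitz functions, using the uniform total-variation bound, density of Lipschitz functions in $C(\Gamma)$, and the Riesz representation theorem. This is a clean and arguably more self-contained route; the only point worth making explicit is that identifying the resulting measure with $\boldsymbol I^\ast$ as an element of $H_{\diamond}^{-\frac{1}{2}}(\Gamma)$ uses that Lipschitz (or smooth) functions are also dense in $H^{\frac{1}{2}}(\Gamma)$, so that agreement on that subspace determines both functionals. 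With that small remark added, your argument is complete and equivalent in strength to the paper's.
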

\begin{proof}
An easy computation reveals that $\boldsymbol I = 0 \in \mathcal{D}(\Gamma)$ is feasible for $\mathbf{(P^2_{\delta})}$, i.e. the set of feasible points is not empty. Moreover, it is closed in the weak-star topology of $H_{\diamond}^{-\frac{1}{2}}(\Gamma)\cap {\cal M}(\Gamma)$, which can be seen as follows: First of all the norm in ${\cal M}(\Gamma)$ is lower semicontinuous with respect to weak-star convergence in this space, which implies that a limit of feasible points satisfies the safety constraint as well. Moreover, weak convergence of a sequence $\boldsymbol  I^k$ in $H_{\diamond}^{-\frac{1}{2}}(\Gamma)$ implies weak convergence of $A\boldsymbol I^k$ in $L^2(\Omega)^3$, which implies that the limit satisfies the state constraint due to the weak closedness of pointwise constraints in $L^2$. Moreover, the feasible set is obviously bounded in ${\mathcal M}(\Omega)$ due to the safety constraint and in $H_{\diamond}^{-\frac{1}{2}}(\Gamma)$ due to Lemma \ref{newlemma}. Hence, the Banach-Alaoglu Theorem implies compactness in the weak-star topology. Since the objective is a bounded linear functional and in particular weak-star lower semicontinuous we obtain the existence of a minimizer by standard reasoning.
\end{proof}\noindent

We mention that our existence proof implicitely uses additional regularity induced by the gradient state constraint as done also in \cite{wollner1}, however we do not discuss further regularity issues of the solution, which is more involved and possibly not to be expected with the nonsmooth terms in the objective functional. 

We observe that the objective as well as all terms involved in the constraints of the optimal control problem are one-homogeneous. This allows us to renormalize the unknown and in this way eliminate the safety constraint. This motivates the following simplified version:
\begin{eqnarray*}
  \mathbf{(P_{\epsilon})} \qquad 
&- \int_{\Omega_t}\! \langle A \boldsymbol I, \boldsymbol e \rangle \, \mathrm{d} \boldsymbol x& \rightarrow \min_{{\boldsymbol I} \in {\mathcal D}(\Gamma)} \\ 
& \text{subject to } &
 \omega | A \boldsymbol I |  \leq \epsilon 
\end{eqnarray*}
By renormalizing a minimizer of $\mathbf{(P_{\epsilon})}$ we can obtain a minimizer for the full optimization problem $\mathbf{(P^2_{\delta})}$ with safety constraint for the control, which of course only holds if the total variation of the minimizer is finite:
\begin{theorem}\label{PeQe}
Let $\epsilon > 0$ be a threshold parameter and $\boldsymbol I \in \mathcal{M}(\Gamma) \cap H_{\diamond}^{-\frac{1}{2}}(\Gamma)$ be a minimizer of the simplified minimization problem $\mathbf{(P_{\epsilon})}$. Then $\tilde{\boldsymbol I} = \frac{4 \boldsymbol I}{\left\| \boldsymbol I\right\|_{\mathcal{M}(\Gamma)}}$ is a minimizer of $\mathbf{(P^2_{\delta})}$ with $\delta = \frac{4 \epsilon}{\left\| \boldsymbol I\right\|_{\mathcal{M}(\Gamma)}}$.
\end{theorem}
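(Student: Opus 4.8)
The plan is to exploit the positive one-homogeneity of every object occurring in $\mathbf{(P_\epsilon)}$ and $\mathbf{(P^2_{\delta})}$: the operator $A$ is linear, the left-hand side $\omega|A\cdot|$ of the state constraint and the total variation $\|\cdot\|_{\mathcal{M}(\Gamma)}$ are positively one-homogeneous, and the objective $\boldsymbol I \mapsto -\int_{\Omega_t}\langle A\boldsymbol I,\boldsymbol e\rangle\,\mathrm{d}\boldsymbol x$ is linear. Hence multiplying an admissible current by a positive constant keeps it admissible and scales the objective value by the same constant. Writing $t := 4/\|\boldsymbol I\|_{\mathcal{M}(\Gamma)}$ --- a finite positive number, since the hypothesis $\boldsymbol I \in \mathcal{M}(\Gamma)\cap H^{-\frac12}_{\diamond}(\Gamma)$ guarantees finiteness of the total variation and $\tilde{\boldsymbol I}$ being well defined presupposes $\boldsymbol I \neq 0$ --- we have $\tilde{\boldsymbol I} = t\boldsymbol I$ and $\delta = t\epsilon$, and the whole argument is a scaling with factor $t$ in one direction and $t^{-1}$ in the other.

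First I would verify that $\tilde{\boldsymbol I}$ is feasible for $\mathbf{(P^2_{\delta})}$. By linearity $A\tilde{\boldsymbol I}=tA\boldsymbol I$, so $\omega|A\tilde{\boldsymbol I}| = t\,\omega|A\boldsymbol I| \le t\epsilon = \delta$ almost everywhere, using feasibility of $\boldsymbol I$ for $\mathbf{(P_\epsilon)}$; and $\|\tilde{\boldsymbol I}\|_{\mathcal{M}(\Gamma)} = t\|\boldsymbol I\|_{\mathcal{M}(\Gamma)} = 4$, so the safety constraint holds, in fact with equality. Finally $\tilde{\boldsymbol I}\in\mathcal{D}(\Gamma)$ because the design set is invariant under multiplication by positive scalars --- the only structural property of $\mathcal{D}(\Gamma)$ that this renormalization uses, and one that holds both for the full admissible set $H^{-\frac12}_{\diamond}(\Gamma)$ and for the linear span of the fixed electrode current patterns in the discrete setting.

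Next I would argue by contradiction. If $\tilde{\boldsymbol I}$ were not a minimizer of $\mathbf{(P^2_{\delta})}$, there would exist $\boldsymbol J\in\mathcal{D}(\Gamma)$ with $\omega|A\boldsymbol J|\le\delta$, $\|\boldsymbol J\|_{\mathcal{M}(\Gamma)}\le 4$ and $-\int_{\Omega_t}\langle A\boldsymbol J,\boldsymbol e\rangle\,\mathrm{d}\boldsymbol x < -\int_{\Omega_t}\langle A\tilde{\boldsymbol I},\boldsymbol e\rangle\,\mathrm{d}\boldsymbol x$. Put $s:=t^{-1} = \|\boldsymbol I\|_{\mathcal{M}(\Gamma)}/4 > 0$. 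Then $s\boldsymbol J\in\mathcal{D}(\Gamma)$ and $\omega|A(s\boldsymbol J)| = s\,\omega|A\boldsymbol J| \le s\delta = \epsilon$ almost everywhere, so $s\boldsymbol J$ is feasible for $\mathbf{(P_\epsilon)}$ (whose only constraint is the state constraint). Multiplying the strict objective inequality by $s>0$ and using $st=1$ together with $-\int_{\Omega_t}\langle A\tilde{\boldsymbol I},\boldsymbol e\rangle\,\mathrm{d}\boldsymbol x = t\bigl(-\int_{\Omega_t}\langle A\boldsymbol I,\boldsymbol e\rangle\,\mathrm{d}\boldsymbol x\bigr)$ yields $-\int_{\Omega_t}\langle A(s\boldsymbol J),\boldsymbol e\rangle\,\mathrm{d}\boldsymbol x < -\int_{\Omega_t}\langle A\boldsymbol I,\boldsymbol e\rangle\,\mathrm{d}\boldsymbol x$, contradicting that $\boldsymbol I$ minimizes $\mathbf{(P_\epsilon)}$. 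Hence $\tilde{\boldsymbol I}$ solves $\mathbf{(P^2_{\delta})}$.

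There is no serious obstacle here; the argument is essentially a change of variables between the two feasible sets, and the only points requiring care are bookkeeping: that $\boldsymbol I\neq 0$ so the renormalization is defined, that $\mathcal{D}(\Gamma)$ is a cone so that both up- and down-scaling remain admissible, and that the finiteness of $\|\boldsymbol I\|_{\mathcal{M}(\Gamma)}$ assumed in the statement is precisely what places $\tilde{\boldsymbol I}$ in $\mathcal{M}(\Gamma)$. It is worth noting that the same scaling runs in reverse --- renormalizing any minimizer of $\mathbf{(P^2_{\delta})}$ whose safety constraint is active so that its state-constraint level becomes $\epsilon$ produces a minimizer of $\mathbf{(P_\epsilon)}$ --- so the two problems are equivalent up to normalization, as anticipated by the one-homogeneity remark preceding the theorem.
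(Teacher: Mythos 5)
Your proof is correct and follows essentially the same route as the paper's: verify feasibility of the rescaled current by one-homogeneity, then argue by contradiction that a better competitor for $\mathbf{(P^2_{\delta})}$ could be scaled back by $\|\boldsymbol I\|_{\mathcal{M}(\Gamma)}/4$ to contradict optimality of $\boldsymbol I$ for $\mathbf{(P_{\epsilon})}$. You additionally make explicit two points the paper leaves implicit ($\boldsymbol I \neq 0$ and the cone property of $\mathcal{D}(\Gamma)$), which is a useful clarification but not a different argument.
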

\begin{proof}\noindent
An easy calculation reveals that $\tilde{\boldsymbol I} = \frac{4 \boldsymbol I}{\left\| \boldsymbol I \right\|_{\mathcal{M}(\Gamma)}}$ fulfills the control and state constraint of $\mathbf{(P^2_{\delta})}$ with $\delta =$ $\frac{4 \epsilon}{\left\| \boldsymbol I \right\|_{\mathcal{M}(\Gamma)}}$. Assume that $\tilde{\boldsymbol I}$ is not a minimizer of $\mathbf{(P^2_{\delta})}$. Then there exists ${\boldsymbol I}_*$ with smaller functional value, and 
$\frac{\left\| \boldsymbol I \right\|_{\mathcal{M}(\Gamma)}}{4 \boldsymbol I} {\boldsymbol I}_*$ is feasible for $\mathbf{(P^2_{\delta})}$  with lower functional value than  ${\boldsymbol I}$, which contradicts the optimality of the latter.
\end{proof}\newline \noindent
At a first glance the usefulness of Theorem \ref{PeQe} might appear limited, since the relation between the constraint bounds $\epsilon$ and $\delta$ is quite implicit, depending on the minimizer of the second problem. However, since there is no natural choice of the bound in either case one can choose $\delta$ or $\epsilon$ in appropriate ranges equally well. It turns out however that $\mathbf{(P_{\epsilon})}$ is somewhat easier in particular with respect to numerical solutions. Moreover, it offers a better position to introduce sparsity constraints on the control by additional penalization, which is the subject of the following discussion. 

\subsection{Penalized Problem Formulation}

In order to control the applied current pattern at the fixed electrodes, the minimization problem $\mathbf{(P_{\epsilon})}$ is extended by two penalties. While an $L_2$ term $\alpha \int_{\Gamma}\! \boldsymbol I^2 \, \mathrm{d} \boldsymbol x$ is introduced to penalize the energy of the applied current pattern, an $L_1$ penalty $\beta\left\| \boldsymbol I \right\|_{\mathcal{M}(\Gamma)}$ is used to minimize the number of active electrodes in the minimization procedure. The latter has to be reinterpreted in the same way as the safety constraint in terms of Radon measures. This leads to the following minimization problem 
\begin{eqnarray*}
  \mathbf{(P^{\alpha,\beta}_{\epsilon})} \qquad 
&- \int_{\Omega_t}\! \langle A \boldsymbol I, \boldsymbol e \rangle \, \mathrm{d} \boldsymbol x &+ \alpha \int_{\Gamma}\! \boldsymbol I^2 \, \mathrm{d} \boldsymbol x + \beta\left\| \boldsymbol I \right\|_{\mathcal{M}(\Gamma)} \rightarrow \min_{{\boldsymbol I} \in {\mathcal D}(\Gamma)} \\ 
& \text{subject to } &
 \omega | A \boldsymbol I |  \leq \epsilon 
\end{eqnarray*}
We call the minimization problem $\mathbf{(P^{\alpha, \beta}_{\epsilon})}$ with $\alpha > 0, \beta = 0$ and $\alpha = 0, \beta > 0$ to be the $L_2$ regularized optimization procedure (L2R) and the $L_1$ regularized optimization procedure (L1R), respectively. If both penalties are active, the penalty term is also known as elastic net. \newline \noindent
Since the $L^2$ penalty adds strict convexity to the problem, we can obtain uniqueness of a minimizer together with existence via analogous arguments as above:
\begin{theorem}
Let $\mathcal{D}(\Gamma)$ be weakly closed in $H_{\diamond}^{-\frac{1}{2}}$, $0 \in \mathcal{D}(\Gamma)$, and let $\alpha > 0$ and $\beta = 0$. Then there exists a unique minimizer $\boldsymbol I \in L^2(\Gamma)$ to the L2R constrained optimization problem $\mathbf{(P^{\alpha,0}_{\epsilon})}$. In the latter statement, $\beta = 0$ can even be generalized to  $\beta \geq 0$. 
\end{theorem}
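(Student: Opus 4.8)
The plan is to mimic the proof of Theorem~\ref{minQE}, adding the new coercivity and convexity contributions coming from the $L^2$-penalty. First I would observe that the feasible set is unchanged from $\mathbf{(P_\epsilon)}$: it contains $\boldsymbol I = 0$ (so it is nonempty) and, by the argument already used in Theorem~\ref{minQE}, it is weak-star closed and bounded in $\mathcal{M}(\Gamma)\cap H_{\diamond}^{-\frac{1}{2}}(\Gamma)$ (the state constraint $\omega|A\boldsymbol I|\le\epsilon$ is weakly closed in $L^2(\Omega)^3$ since $A$ is linear and bounded, and the bound on the total variation plus Lemma~\ref{newlemma} gives boundedness). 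Next I would take a minimizing sequence $\boldsymbol I^k$ for the objective $J(\boldsymbol I) = -\int_{\Omega_t}\langle A\boldsymbol I,\boldsymbol e\rangle\,\mathrm{d}\boldsymbol x + \alpha\int_\Gamma \boldsymbol I^2\,\mathrm{d}\boldsymbol x + \beta\|\boldsymbol I\|_{\mathcal M(\Gamma)}$; since $J(0)=0$, along the sequence $\alpha\|\boldsymbol I^k\|_{L^2(\Gamma)}^2$ stays bounded (the linear term is controlled by $\|\boldsymbol I^k\|_{H^{-1/2}}$, hence by Lemma~\ref{newlemma}, and the $\beta$-term is $\ge 0$). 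With $\alpha>0$ this yields a bound in $L^2(\Gamma)$, so after passing to a subsequence $\boldsymbol I^k \rightharpoonup \boldsymbol I^*$ weakly in $L^2(\Gamma)$, which also gives weak-star convergence in $\mathcal M(\Gamma)$ and weak convergence in $H_{\diamond}^{-\frac{1}{2}}(\Gamma)$. Then $\boldsymbol I^*\in\mathcal D(\Gamma)$ by weak closedness of $\mathcal D(\Gamma)$, and $\boldsymbol I^*$ is feasible by the closedness argument above.

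For lower semicontinuity of $J$: the linear term $-\int_{\Omega_t}\langle A\boldsymbol I,\boldsymbol e\rangle$ is weakly continuous because $A$ maps weakly convergent sequences in $H_{\diamond}^{-\frac{1}{2}}$ to weakly convergent sequences in $L^2(\Omega)^3$; the term $\alpha\int_\Gamma\boldsymbol I^2$ is convex and strongly continuous on $L^2(\Gamma)$, hence weakly lower semicontinuous; and $\beta\|\cdot\|_{\mathcal M(\Gamma)}$ is weak-star lower semicontinuous as a norm (already used in Theorem~\ref{minQE}). Adding these, $J$ is weakly lower semicontinuous along the sequence, so $J(\boldsymbol I^*)\le\liminf J(\boldsymbol I^k)=\inf$, proving $\boldsymbol I^*$ is a minimizer. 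Uniqueness follows from strict convexity: $J$ is a sum of a linear functional, the strictly convex map $\boldsymbol I\mapsto\alpha\|\boldsymbol I\|_{L^2(\Gamma)}^2$ (here $\alpha>0$ is essential), and the convex map $\beta\|\cdot\|_{\mathcal M(\Gamma)}$; a sum of a strictly convex and a convex functional over the convex feasible set is strictly convex, so the minimizer is unique. The final remark that $\beta\ge 0$ (rather than $\beta=0$) is admissible needs no change, since the $\beta$-term only entered as a nonnegative convex quantity throughout.

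I expect the only genuinely delicate point to be bookkeeping about which topology is used where — in particular making sure the same subsequence simultaneously converges weakly in $L^2(\Gamma)$, weak-star in $\mathcal M(\Gamma)$, and weakly in $H_{\diamond}^{-\frac{1}{2}}(\Gamma)$, and that the limit lies in $L^2(\Gamma)$ (which is exactly what the $\alpha>0$ coercivity buys us, and which is why the conclusion upgrades from $\boldsymbol I\in\mathcal M(\Gamma)\cap H_{\diamond}^{-\frac{1}{2}}(\Gamma)$ to $\boldsymbol I\in L^2(\Gamma)$). Since $L^2(\Gamma)\hookrightarrow\mathcal M(\Gamma)$ (on the compact Lipschitz boundary $\Gamma$) and $L^2(\Gamma)\hookrightarrow H_{\diamond}^{-\frac{1}{2}}(\Gamma)$, the $L^2$-weak limit is consistent with the weaker limits, so no contradiction arises; this compatibility is the one place I would write a careful sentence rather than appeal to ``analogous arguments as above''. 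Everything else is a routine adaptation of the proof of Theorem~\ref{minQE}.
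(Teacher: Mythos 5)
Your proposal is correct and is exactly the argument the paper intends: the paper gives no explicit proof of this theorem, only the remark that the $L^2$-penalty adds strict convexity so that existence and uniqueness follow ``via analogous arguments as above,'' and your direct-method argument (coercivity of the $\alpha$-term yielding an $L^2(\Gamma)$-bound on a minimizing sequence, weak lower semicontinuity of each term, strict convexity for uniqueness) is the standard way to fill that in. One small slip: the feasible set of $\mathbf{(P^{\alpha,0}_{\epsilon})}$ carries no total-variation constraint, so it is not bounded in $\mathcal{M}(\Gamma)$ as your first paragraph suggests --- but this does not matter, since your actual compactness argument correctly extracts the needed bound from the $\alpha$-term along the minimizing sequence rather than from the feasible set.
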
\newline \noindent
For the problem with pure $L^1$-penalization the proof of existence is almost identical to the one of Theorem \ref{PeQe} and we hence conclude:
\begin{theorem}
Let $\mathcal{D}$ be weakly closed in $H_{\diamond}^{-\frac{1}{2}}$ and let $\alpha = 0$ and $\beta > 0$. Then there exists at least one minimizer $\boldsymbol I \in \mathcal{M}(\Gamma) \cap \mathcal{D}(\Gamma)$ to the L1R constrained optimization problem $\mathbf{(P^{0,\beta}_{\epsilon})}$.
\end{theorem}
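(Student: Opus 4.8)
The plan is to run the direct method of the calculus of variations, following the proof of Theorem~\ref{minQE} almost verbatim; the only structural change is that the a~priori bound on the total variation $\Vert \boldsymbol I\Vert_{\mathcal M(\Gamma)}$ is now supplied by the penalty term $\beta\Vert\boldsymbol I\Vert_{\mathcal M(\Gamma)}$ in the objective rather than by the (now dropped) safety constraint $\Vert\boldsymbol I\Vert_{\mathcal M(\Gamma)}\le 4$. As in the preceding theorems we take $0\in\mathcal D(\Gamma)$, so that the feasible set of $\mathbf{(P^{0,\beta}_{\epsilon})}$ is nonempty (since $\omega\,|A\,0| = 0\le\epsilon$) and the infimum $m$ of the objective over it satisfies $m\le 0$, in particular $m>-\infty$ once coercivity is established.

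First I would take a minimizing sequence $(\boldsymbol I^k)$ of feasible points. Since $\omega$ is bounded away from zero on all of $\Omega$ (it equals $1$ on $\Omega_r$ and a positive constant on $\Omega_t$), the state constraint yields $|A\boldsymbol I^k|\le\epsilon/\inf_\Omega\omega$ a.e., hence the linear part $-\int_{\Omega_t}\langle A\boldsymbol I^k,\boldsymbol e\rangle\,\mathrm d\boldsymbol x$ is bounded in modulus by $|\Omega_t|\,\epsilon/\inf_\Omega\omega$ (using $|\boldsymbol e|=1$). As the objective values stay near $m$, this forces $\beta\Vert\boldsymbol I^k\Vert_{\mathcal M(\Gamma)}$, and therefore $\Vert\boldsymbol I^k\Vert_{\mathcal M(\Gamma)}$, to be bounded. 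At the same time Lemma~\ref{newlemma}, applied with $\delta=\epsilon$, gives a uniform bound on $\Vert\boldsymbol I^k\Vert_{H^{-1/2}(\Gamma)}$.

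With both bounds available I would invoke the Banach--Alaoglu theorem to extract a subsequence converging weakly-$*$ in $\mathcal M(\Gamma)$ and weakly in $H_{\diamond}^{-1/2}(\Gamma)$, to a common limit $\boldsymbol I^*$ which lies in $\mathcal D(\Gamma)$ by the assumed weak closedness. Since $A$ is a bounded linear operator (Theorem~\ref{thmwelldef}), it is weak--weak continuous, so $A\boldsymbol I^k\rightharpoonup A\boldsymbol I^*$ in $L^2(\Omega)^3$; as $\{v\in L^2(\Omega)^3:\omega\,|v|\le\epsilon\ \text{a.e.}\}$ is convex and strongly closed, hence weakly closed, $\boldsymbol I^*$ satisfies the state constraint. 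For the objective, the linear term converges to its value at $\boldsymbol I^*$ (being the fixed $L^2$-pairing of $A\boldsymbol I$ against $\boldsymbol e\,\mathbf 1_{\Omega_t}$), while $\boldsymbol I\mapsto\beta\Vert\boldsymbol I\Vert_{\mathcal M(\Gamma)}$ is weak-$*$ lower semicontinuous on $\mathcal M(\Gamma)$; summing, the objective at $\boldsymbol I^*$ is $\le\liminf_k$ of the objective at $\boldsymbol I^k$, i.e. $\le m$, so $\boldsymbol I^*$ is a minimizer.

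The one genuinely delicate point, exactly as in Theorem~\ref{minQE}, is the interplay of the two topologies: on a two-dimensional boundary $\mathcal M(\Gamma)\not\hookrightarrow H^{-1/2}(\Gamma)$, so one works in the intersection space $\mathcal M(\Gamma)\cap H_{\diamond}^{-1/2}(\Gamma)$ and must check that the weak-$*$ limit in $\mathcal M(\Gamma)$ and the weak limit in $H_{\diamond}^{-1/2}(\Gamma)$ of the subsequence coincide --- which follows by testing both against smooth functions --- and that this common $\boldsymbol I^*$ is simultaneously legitimate for the total-variation penalty and for the $H^{-1/2}$-based forward problem defining $A$. Everything else is a routine repetition of the argument for Theorem~\ref{minQE}, with the $L^1$ penalty now playing the coercivity role previously served by the safety constraint; no renormalization as in Theorem~\ref{PeQe} is required here, since the safety bound is not imposed.
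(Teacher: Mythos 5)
Your argument is correct and is essentially the one the paper intends: the paper gives no explicit proof here, remarking only that it is ``almost identical'' to the earlier existence arguments, and your direct-method proof --- with the coercivity in $\mathcal{M}(\Gamma)$ now supplied by the penalty $\beta\left\|\boldsymbol I\right\|_{\mathcal{M}(\Gamma)}$ together with the uniform bound on the linear term forced by the state constraint, the $H^{-\frac{1}{2}}$ bound from Lemma~\ref{newlemma}, Banach--Alaoglu, and weak-star lower semicontinuity --- fills in exactly those details. The only caveat, which you correctly flag, is that the hypothesis $0\in\mathcal{D}(\Gamma)$ (omitted in this theorem's statement but present in the neighbouring ones) is needed to guarantee a nonempty feasible set and a finite infimum.
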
 

\subsection{Discrete Problem}
For numerical discretization of the considered optimization problem we use the finite element method. The solution of the forward problem is approximated by looking for $\Phi = \sum_{k=1}^M \alpha_k \Phi_k$ with the $\Phi_k$ being edge-based finite element basis functions vanishing on $\Gamma_D$.      
$$
\int_\Omega \sigma \nabla \Phi \cdot \nabla \Psi~dx = \int_\Gamma {\boldsymbol I}~\Psi ~d\sigma,  
$$
for all $\Psi$ in the span of $\{\Phi_k\}$. Using standard techniques this allows to verify the existence and uniqueness of the discrete solution for arbitrary $\boldsymbol I \in H_{\diamond}^{-\frac{1}{2}}(\Gamma)$ and to introduce a discretized solution operator 
$$\tilde A: H_{\diamond}^{-\frac{1}{2}}(\Gamma) \rightarrow L^2(\Omega)^3, {\boldsymbol I} \mapsto \sigma \nabla \Phi. $$  
Let $T_i$, $i=1,\ldots,N$ denote the volume elements in the finite element discretization (tetrahedra or cuboids) and denote for a function $u \in L^2(\Omega)^3$ the local mean value by  $$ \overline{u}_i = \frac{1}{|T_i|} \int_{T_i} u(x) ~dx. $$
Finally we discretize $\boldsymbol I = \sum_{j=1}^{S} I_j \phi_j$ with $\phi_j$ being local basis functions on $\Gamma$. From the fact that ${\boldsymbol I}$ has mean zero we can eliminate $I_S$. 
Now we introduce mappings $A_1$ and $A_2$ incorporating the discretizations:
\begin{eqnarray*}
A_1: \mathbb{R}^{S-1} (\Gamma) \rightarrow \mathcal{D}(\Gamma), \quad\boldsymbol I_S = (I_i) \mapsto \boldsymbol I = \sum_{i=1}^{S} I_i \phi_i\\
A_2:  L^2(\Omega)^3 \rightarrow \mathbb{R}^{3N}(\Omega), \quad \sigma \nabla \Phi  \mapsto (\overline{(\sigma \nabla \Phi)_i } )_{i=1,\ldots,N}
\end{eqnarray*}
and define the discretized operator $A_2 \circ \tilde A \circ A_1 = B$, represented by a matrix in $ \mathbb{R}^{3N \times (S-1)}$. 
\newline 
\noindent 
As the target vector $\boldsymbol e$ is only defined in the target region, we define a continuous mapping $\tilde{\boldsymbol e}$: $\Omega_t \mapsto  \mathbb{R}^{3N}(\Omega)$
\begin{align*}
\tilde{\boldsymbol e}_i=\left\{\begin{array}{cl} \boldsymbol e, & \mbox{if } T_i \subset \Omega_t \\ 0, & \mbox{otherwise } \end{array}\right.
\end{align*}
and replace $\langle B \boldsymbol I_S , \boldsymbol e \rangle$ by $\langle B \boldsymbol I_S , \tilde{\boldsymbol e} \rangle$, since $\langle B \boldsymbol I_S , \tilde{\boldsymbol e} \rangle$ = 0 in $\Omega \setminus \Omega_t$. 
We now introduce the discretized optimization problem $\mathbf{(\bar{P}^{\alpha,\beta}_{\epsilon})}$ as
\begin{eqnarray*}
  \mathbf{(\bar{P}^{\alpha,\beta}_{\epsilon})} \qquad 
&- \langle B \boldsymbol I_S, \tilde{\boldsymbol e} \rangle &+ \alpha \langle \boldsymbol I_S, \boldsymbol I_S\rangle + \beta \left\|\boldsymbol I_S \right\|_1 \rightarrow \min_{{\boldsymbol I}_s \in \mathbb{R}^{S-1}} \\ 
& \text{subject to } &
 \omega_i |  (B \boldsymbol I_S)_i |  \leq \epsilon,
\end{eqnarray*} 
where $\omega_i$ is a constant approximation of $\omega$ in $T_i$ (in our examples defined piecewise constant anyway).
\newline
The existence (and potential uniqueness) of minimizers can now be verified exactly as for the continuous case above, defining an appropriate discrete version of the $H^{-\frac{1}2}$-norm on 
$$  {\mathcal D}(\Gamma) = \{\boldsymbol I = \sum_{i=1}^{S} I_i \phi_i\},  $$
e.g. via 
$$ \Vert \boldsymbol I \Vert = \sup \{ \int_\Gamma {\boldsymbol I} \Phi ~d\sigma ~|~
\Phi \in \mbox{~span~}\{\Phi_k\}, \Vert \Phi \Vert_{H^1(\Omega)} \leq C_T \} .$$ 
For the sake of brevity we do not discuss the issue of, which can, e.g., be shown via $\Gamma$-convergence of the functionals involved.
\section{Numerical Optimization}
For numerical solution of the corresponding discretized problem we employ the alternating direction method of multipliers (ADMM) \cite{Boyd2011}. The ADMM is a variant of the 
\textit{Augmented Lagrangian method}, a class for solving constrained optimization problems. The method combines important convergence properties (no strict convergence or finiteness is required) and the decomposability of the dual ascent method \cite{Boyd2011}. \newline
To solve the discretized optimization problem $\mathbf{(\bar{P}^{\alpha, \beta}_{\epsilon})}$, we substitute $B\boldsymbol I_S = \boldsymbol y \in \mathbb{R}^{3N}$ and $\boldsymbol I_S = \boldsymbol z \in \mathbb{R}^{S-1}$ and obtain the following Lagrangian $L_{\mu_1,\mu_2}(\boldsymbol I_S,\boldsymbol y,\boldsymbol z,\boldsymbol p_1,\boldsymbol p_2)$ which is to be minimized
\begin{eqnarray*}
 L_{\mu_1,\mu_2}(\boldsymbol I_S,\boldsymbol y,\boldsymbol z,\boldsymbol p_1,\boldsymbol p_2) & = & \alpha \langle\boldsymbol z,\boldsymbol z\rangle + \beta  \left\|\boldsymbol z \right\|_1 + \frac{\mu_1}{2} \langle\boldsymbol z-\boldsymbol I_S,\boldsymbol z-\boldsymbol I_S\rangle + \langle \boldsymbol z-\boldsymbol I_S, \boldsymbol p_1\rangle  \\
 &  &- \langle \boldsymbol y , \tilde{\boldsymbol e}\rangle + \frac{\mu_2}{2} \langle \boldsymbol y - B\boldsymbol I_S, \boldsymbol y - B\boldsymbol I_S\rangle + \langle \boldsymbol y - B\boldsymbol I_S, \boldsymbol p_2 \rangle \\
 & &\text{subject to} \; \omega_i |\boldsymbol y_i| \leq \epsilon
 \end{eqnarray*}
with $\mu_1, \mu_2 \in \mathbb{R} $ and $\boldsymbol p_1 \in \mathbb{R}^{S-1}, \boldsymbol p_2 \in \mathbb{R}^{3N}$ being the \textit{augmented Lagrangian parameters} and the \textit{dual variables}, respectively \cite{Boyd2011}. \newline
\textbf{$I_S$-step}: Rearranging and neglecting terms without $\boldsymbol I_S$, putting the integrals together and expanding leads to a quadratic minimization problem
\begin{eqnarray*}
 \frac{\mu_1}{2} (\langle\boldsymbol I_S^{k+1},\boldsymbol I_S^{k+1}\rangle & - & 2 \langle\boldsymbol z^k,\boldsymbol I_S^{k+1}\rangle + \frac{2}{\mu_1} \langle\boldsymbol I_S^{k+1},\boldsymbol p_1^k\rangle) \\
&+& \frac{\mu_2}{2} (-2\langle\boldsymbol y^k,B\boldsymbol I_S^{k+1}\rangle + \langle B\boldsymbol I_S^{k+1}, B\boldsymbol I_S^{k+1}\rangle + \frac{2}{\mu_2}\langle B\boldsymbol I_S^{k+1},\boldsymbol p_2^k\rangle)
\end{eqnarray*}
Differentiating with respect to $\boldsymbol I_S^{k+1}$ and setting the equation system to be zero results in
\begin{eqnarray*}
& &(\mu_1 Id + \mu_2 B^{tr} B) \boldsymbol I_S^{k+1} - (\mu_1 \boldsymbol z^k - \boldsymbol p_1^k + \mu_2 B^{tr} \boldsymbol y^k - B^{tr} \boldsymbol p_2^k) = 0 \\
& & \Rightarrow \boldsymbol I_S^{k+1} = (\mu_1Id + \mu_2 B^{tr}B)^{-1}(\mu_1 \boldsymbol z^k - \boldsymbol p_1^k + \mu_2 B^{tr} \boldsymbol y^k - B^{tr} \boldsymbol p_2^k)
\end{eqnarray*}
\textbf{$y$-step}: Rearranging and neglecting terms and putting the integrals together results in
\begin{eqnarray*}
& &\frac{\mu_2}{2} \langle \boldsymbol y^{k+1} - {B\boldsymbol I_S^{k+1} - \frac{1}{\mu_2} \boldsymbol p_2^k - \frac{1}{\mu_2} \tilde{\boldsymbol e}}\ ,\ \boldsymbol y^{k+1} - {B\boldsymbol I_S^{k+1} - \frac{1}{\mu_2} \boldsymbol p_2^k - \frac{1}{\mu_2} \tilde{\boldsymbol e}}\rangle\\
& & \text{subject to} \; \omega_i |\boldsymbol y_i| \leq \epsilon
\end{eqnarray*}
which can be solved analytically as follows
\begin{equation*}
\boldsymbol y^{k+1}_i =\left\{\begin{array}{cl}  \frac{\epsilon}{\omega_i} \frac{(B\boldsymbol I_S^{k+1} + \frac{1}{\mu_2}\boldsymbol p_2^k + \frac{1}{\mu_2}\tilde{\boldsymbol e})_i}{|(B\boldsymbol I_S^{k+1} + \frac{1}{\mu_2} \boldsymbol p_2^k + \frac{1}{\mu_2}\tilde{\boldsymbol e})_i|}, & \mbox{if } |(B\boldsymbol I_S^{k+1} + \frac{1}{\mu_2}\boldsymbol p_2^k + \frac{1}{\mu_2}\tilde{\boldsymbol e})_i| > \frac{\epsilon}{\omega_i} \\ 
(B\boldsymbol I_S^{k+1} + \frac{1}{\mu_2}\boldsymbol p_2^k + \frac{1}{\mu_2}\tilde{\boldsymbol e})_i, & \mbox{otherwise } \end{array}\right.
\end{equation*}
\textbf{$z$-step}: Rearranging the terms, leaving out the terms without $\boldsymbol z$ and putting the integrals together leads to
\begin{eqnarray*}
& &\alpha \langle \boldsymbol z^{k+1}, \boldsymbol z^{k+1} \rangle + \beta \left\| \boldsymbol z^{k+1} \right\|_1 + \frac{\mu_1}{2} \langle \boldsymbol z^{k+1}-\boldsymbol I_S^{k+1}, \boldsymbol z^{k+1}-\boldsymbol I_S^{k+1} \rangle + \langle \boldsymbol z^{k+1}-\boldsymbol I_S^{k+1} , \boldsymbol p_1^k \rangle \\
& = &\alpha \langle \boldsymbol z^{k+1}, \boldsymbol z^{k+1} \rangle \\
&+ & \langle \underbrace{\sqrt{\frac{\mu_1}{2}}Id}_{:= \tilde{Id}} \boldsymbol z^{k+1}- \sqrt{\frac{\mu_1}{2}} \boldsymbol I_S^{k+1} - \frac{1}{\sqrt{2 \mu_1}} \boldsymbol p_1^k + \frac{1}{\sqrt{2 \mu_1}} \beta , \sqrt{\frac{\mu_1}{2}}Id \boldsymbol z^{k+1}- \sqrt{\frac{\mu_1}{2}} \boldsymbol I_S^{k+1} - \frac{1}{\sqrt{2 \mu_1}} \boldsymbol p_1^k + \frac{1}{\sqrt{2 \mu_1}} \beta  \rangle
\end{eqnarray*}
The solution to this equation is given as
\begin{equation*}
 \boldsymbol z^{k+1} = (\tilde{Id}^{tr} \tilde{Id} + \alpha Id)^{-1} \tilde{Id}^{tr} (\sqrt{\frac{\mu_1}{2}}\boldsymbol I_S^{k+1} + \frac{1}{\sqrt{2 \mu_1}} \boldsymbol p_1^k - \frac{1}{\sqrt{2 \mu_1}} \beta )
\end{equation*}
\textbf{Dual update}: Finally, the dual variables are updated
\begin{eqnarray*}
 \boldsymbol p_1^{k+1} & = & \mu_1(\boldsymbol p_1^k + \boldsymbol I_S^{k+1} - \boldsymbol z^{k+1})\\
 \boldsymbol p_2^{k+1} & = & \mu_2(\boldsymbol p_2^k + B\boldsymbol I_S^{k+1} - \boldsymbol y^{k+1})
\end{eqnarray*}
We mention that a complete convergence analysis can be carried out following the arguments in \cite{Boyd2011}.
Algorithm \ref{AlgotCSOpt} depicts an overview of the optimization steps for the numerical solution of the discretized problem $\mathbf{(\bar{P}^{\alpha, \beta}_{\epsilon})}$. 
\rev{Note that the Euclidean norm was used for the stopping criterion in Step 3.}
\begin{algorithm}[t]
\caption{Algorithm for the discretized minimization problem $\mathbf{(\bar{P}^{\alpha, \beta}_{\epsilon})}$}
\begin{algorithmic}[1] \label{AlgotCSOpt}
\STATE Input: $B, \epsilon, \mu_1, \mu_2, \alpha, \beta, \omega, \tilde{\boldsymbol e}$, $\boldsymbol I_{prev}$, $\boldsymbol I_S^0$, $\boldsymbol p_1^0$, $\boldsymbol p_2^0$, $\boldsymbol z^0$, $\boldsymbol y^0$, $N$, $TOL$
\STATE $k=0$
\WHILE{$ k < 3 \; or \; \left\| \boldsymbol I_S^{k} - \boldsymbol I_{prev} \right\| > TOL$}
\STATE $\boldsymbol I_{prev} = \boldsymbol I_S^{k}$
\STATE $\boldsymbol I_S^{k+1} = (\mu_1Id + \mu_2 B^{tr}B)^{-1}(\mu_1 \boldsymbol z^k - \boldsymbol p_1^k + \mu_2 B^{tr} \boldsymbol y^k - B^{tr} \boldsymbol p_2^k)$ 
\FOR{$i=1, \cdots, 3N$}
\IF{$|(B\boldsymbol I_S^{k+1} + \frac{1}{\mu_2}\boldsymbol p_2^k + \frac{1}{\mu_2}\tilde{\boldsymbol e})_i| > \frac{\epsilon}{\omega_i}$}
\STATE $\boldsymbol y_i^{k+1} = \frac{\epsilon}{\omega_i} \frac{(B\boldsymbol I_S^{k+1} + \frac{1}{\mu_2}\boldsymbol p_2^k + \frac{1}{\mu_2}\tilde{\boldsymbol e})_i}{|(B\boldsymbol I_S^{k+1} + \frac{1}{\mu_2}\boldsymbol p_2^k + \frac{1}{\mu_2}\tilde{\boldsymbol e})_i|}$
\ELSE
\STATE $\boldsymbol y_i^{k+1} = (B\boldsymbol I_S^{k+1} + \frac{1}{\mu_2}\boldsymbol p_2^k + \frac{1}{\mu_2}\tilde{\boldsymbol e})_i$
\ENDIF
\ENDFOR
\STATE $\boldsymbol z^{k+1} = (\tilde{Id}^{tr} \tilde{Id} + \alpha Id)^{-1} \tilde{Id}^{tr} (\sqrt{\frac{\mu_1}{2}} \boldsymbol I_S^{k+1} + \frac{1}{\sqrt{2 \mu_1}} \boldsymbol p_1^k - \frac{1}{\sqrt{2 \mu_1}} \beta)$
\STATE $\boldsymbol p_1^{k+1} = \mu_1(\boldsymbol p_1^k +\boldsymbol  I_S^{k+1} - \boldsymbol z^{k+1})$
\STATE $ \boldsymbol p_2^{k+1} = \mu_2(\boldsymbol p_2^k + B\boldsymbol I_S^{k+1} - \boldsymbol y^{k+1})$
\STATE $k = k+1$  
\ENDWHILE
\STATE $\delta = \frac{4 \epsilon}{\left\| \boldsymbol I_S^{k+1} \right\|_{\mathcal{M}(\Gamma)}}$
\RETURN $\boldsymbol I_S^{k+1},\ \delta,\ k,\ \left\| \boldsymbol I_S^{k+1} \right\|_{\mathcal{M}(\Gamma)}$
\end{algorithmic}
\end{algorithm} 

\section{Results and Discussion}
\begin{table}[!t]
	\centering
  \renewcommand{\arraystretch}{1.3}
\caption{Tangential target vector $\tilde{\boldsymbol e}$: Averaged (over 924 targets) scaled threshold value $\delta$, the number of iterations $k$ and the scaling factor $\left\| \boldsymbol I \right\|_{\mathcal{M}(\Gamma)}$ for tangential target vectors $\tilde{\boldsymbol e}$ and different input values $\epsilon$.}\label{TabEpsTang}
	\begin{tabular}{cccccc}
			\hline
 			{\bfseries $ \epsilon$} & 0.0001 & 0.0005 & 0.001 & 0.005 & 0.01 \\
			\hline \hline
			{\bfseries $ \delta \left[ A m^{-2} \right]$} & 0.00332 & 0.0085 & 0.0154 & 0.0175 & 0.0208 \\
			{\bfseries Iterations $k$} & 132 & 53 & 33 & 203 & 143\\
			{\bfseries $\left\| \boldsymbol I \right\|_{\mathcal{M}(\Gamma)}$} & 120.15 & 233.96 & 352.279 & 1312.2 & 1919.93 \\
			\hline
	\end{tabular}
\end{table}\noindent
We generated a highly realistic geometry-adapted six-compartment (skin, skull compacta, skull spongiosa, CSF, gray and white matter) finite element head model from a T1- and a 
T2-weighted magnetic resonance image (MRI). For the compartments skin, skull compacta, skull spongiosa, CSF and gray matter we employed conductivity values $\sigma$ = \SI{0.43}{S m^{-1}}, \SI{0.007}{S m^{-1}}, \SI{0.025}{S m^{-1}}, \SI{1.79}{S m^{-1}} and \SI{0.33}{S m^{-1}}, respectively \cite{Haueisen2002, Ramon2004, Wagner2014}. For the modeling of the white matter anisotropy, a diffusion tensor MRI was used and the effective medium approach was applied. The effective medium approach assumes a linear relationship between the effective electrical conductivity tensor and the effective water diffusion tensor in white matter \cite{Tuch2001}, resulting in a mean conductivity value $\sigma = \SI{0.14}{S m^{-1}}$ for the white matter compartment \cite{Wagner2014}. As the current density amplitudes in the skin and CSF compartments are much stronger when compared to the brain compartments \cite{Wagner2014}, we only visualized the current densities in the brain to enable best visibility of the cortical current flow pattern. The software package SCIRun was used for visualization \cite{Dannhauer2012}. \newline
Using this highly realistic volume conductor model we comprehensively examine the optimized current densities in the brain with regard to focality, target intensity and orientation. Firstly, the 
maximal current density in non-target regions is investigated for the discretized optimization problem $\mathbf{(\bar{P}^{\alpha,\beta}_{\epsilon})}$ and four target vectors are used to calculate optimized stimulation protocols. For all simulations, the target area $\Omega_t$ always consists of the elements corresponding to the target vectors and only the volume conductor elements in the brain are used for optimization, i.e., $\omega = 1$ only in the brain compartments and $\omega \ll 1$ in the target region and in the CSF, skin and skull compartments. \newline
In order to investigate the current flow field that is induced by an optimized standard bipolar electrode montage, we use a maximum two electrode (M2E) approach. This approach stimulates only the main positive (anode) and the main negative electrode (cathode) of the L1R optimized stimulation protocols with a total current of \SI{1}{mA}. In order to quantify the optimized current flow fields, we calculate the current densities in the direction of the target vectors ($CD_t$) and the percentage of current density that is oriented parallel to the target vector ($PAR$), as shown in columns 4 and 5 in Table \ref{Quantification}, respectively.

\subsection{Maximal current density in non-target regions}
In this section, the L1R approximated discretized optimization problem $\mathbf{(\bar{P}^{0,\beta}_{\epsilon})}$ with $\beta$ = 0.001 is used to calculate optimized stimulation protocols for a set of 924 tangential (parallel to the inner skull surface) and radial (perpendicular to the inner skull surface) target vectors and Theorem \ref{PeQe} is applied to estimate the averaged maximal current density in non-target regions as shown in Tables \ref{TabEpsTang} and \ref{TabEpsRad}. Furthermore, the number of iterations until a minimum was found and the scaling factor $\left\| \boldsymbol I\right\|_{\mathcal{M}(\Gamma)}$ is depicted. 
\rev{As can be seen in the tables, for the threshold value $\epsilon$ = 0.001, the safety value $\delta$ ensures that the maximal current density in the brain compartment is not dangerous for the subject. Furthermore, the number of iterations $k$ until a minimum was found is lowest for $\epsilon$ = 0.001. For this reason, a threshold value of $\epsilon$ = 0.001 is thus used in this study as it allows safe and well-targeted stimulation and fast and robust computation of the stimulation protocol.} 
\rev{Note that the number of iterations until a minimum is found increases with decreasing mesh size. However, because the maximal resolution of state-of-the-art whole-head MRI sequences is 1mm, we did not investigate mesh sizes smaller than 1mm.}

\begin{table}[!t]
	\centering
  \renewcommand{\arraystretch}{1.3}
\caption{Radial target vector $\tilde{\boldsymbol e}$: Averaged (over 924 targets) scaled threshold value $\delta$, the number of iterations $k$ and the scaling factor $\left\| \boldsymbol I \right\|_{\mathcal{M}(\Gamma)}$ for radial target vectors 
$\tilde{\boldsymbol e}$ and different input values $\epsilon$.} \label{TabEpsRad}
	\begin{tabular}{cccccc}
			\hline
 			{\bfseries $ \epsilon $} & 0.0001 & 0.0005 & 0.001 & 0.005 & 0.01 \\
			\hline \hline
			{\bfseries $ \delta \left[ A m^{-2} \right]$} & 0.0043 & 0.0114 & 0.015 & 0.023 & 0.025 \\
			{\bfseries Iterations $k$} & 134 & 36 & 35 & 68 & 118\\
			{\bfseries $\left\| \boldsymbol I \right\|_{\mathcal{M}(\Gamma)}$} & 92.03 & 176.08 & 267.06 & 887.08 & 1597.73 \\
			\hline
	\end{tabular}
\end{table}

\subsection{Mainly tangential target vector}\label{tangsingle}
\begin{figure}[t!]
\begin{center}
\includegraphics*[width = \textwidth,keepaspectratio]{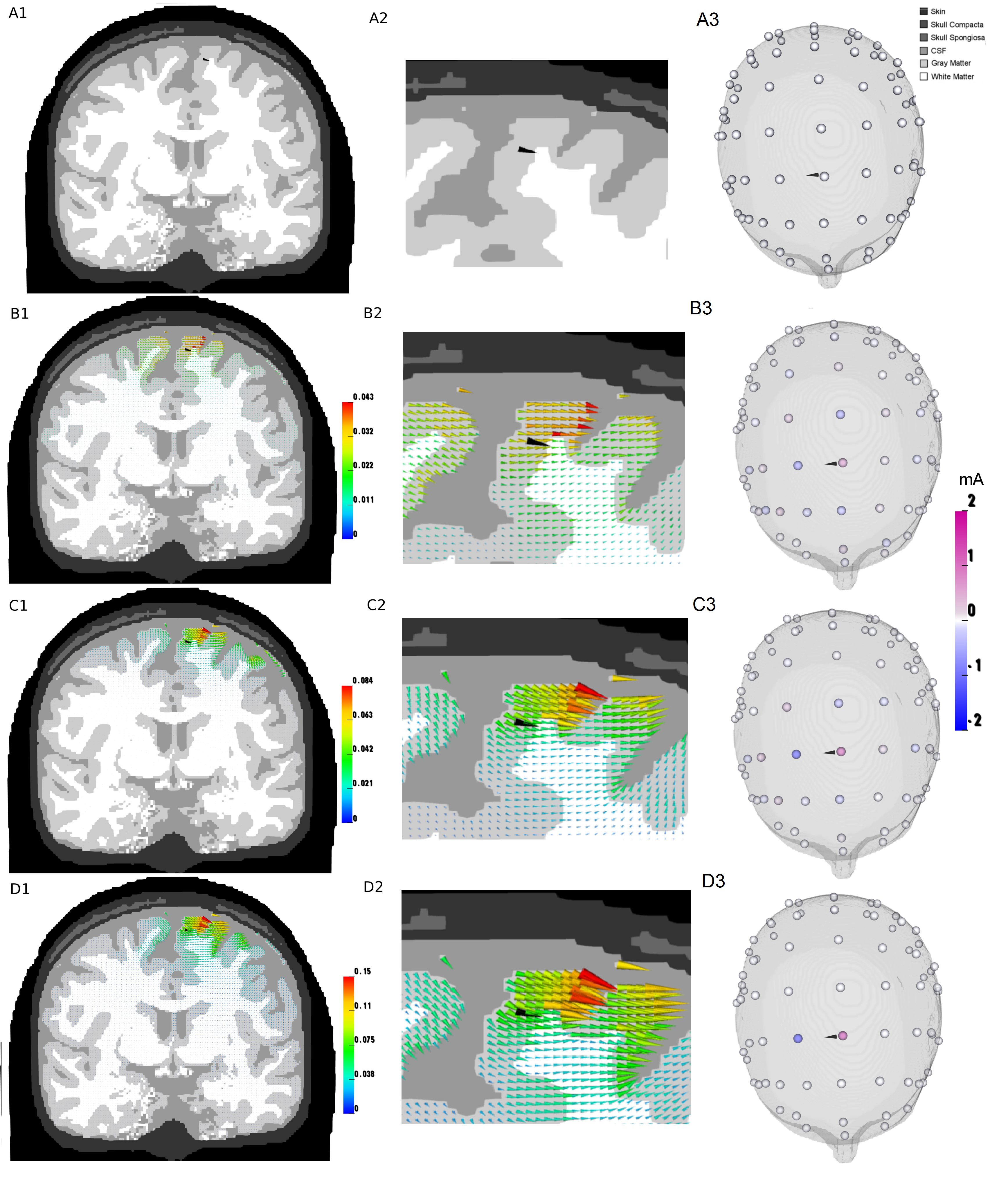}
\end{center}
\caption{A mainly tangential target vector (Figs. -A1 and -A3): Figures -B -C and -D present optimization results for the L2R, L1R and M2E approach, respectively. \rev{The optimized current densities in the brain compartment (different scaling of rows), in a zoomed region of interest and the corresponding stimulation protocols (same scaling) are shown in Figures -1, -2 and -3, respectively.}}\label{FigTangOpt}
\end{figure}
Figure \ref{FigTangOpt} displays the optimized current densities (for better visibility, we used a different scaling for the different rows) and the corresponding stimulation protocols (same scaling) for a mainly tangential target vector as shown in Figures \ref{FigTangOpt}-A1 and -A2. As can be seen in Figures \ref{FigTangOpt}-B1 and \ref{FigTangOpt}-C1, the optimized current flow fields using the L2R and L1R approaches, respectively, show high focality and maximal current densities can be observed in the target region. While the L2R and L1R approaches lead to target current densities of \SI{0.022}{A m^{-2}} and \SI{0.038}{A m^{-2}} (Table \ref{Quantification}, second column), current densities in non-target brain regions are rather weak (Figs.\ref{FigTangOpt}-B1 and -C1). Overall, due to the rather widespread applied current pattern at the fixed electrodes (Fig.\ref{FigTangOpt}-B2), the L2R optimized current flow field (Fig.\ref{FigTangOpt}-B1) has smaller amplitude when compared to the L1R one (Fig.\ref{FigTangOpt}-C1). On the other hand, the L1R stimulation protocol (Fig.\ref{FigTangOpt}-C2) shows high focality with mainly two active electrodes, while only very weak compensating currents are injected at the neighboring electrodes. The M2E approach results in higher target current densities of \SI{0.071}{A m^{-2}} (Table \ref{Quantification}, second column). However, using the M2E approach, relatively strong current densities can also be noted in non-target regions, especially in pyramidal tract and deeper white matter regions (Fig.\ref{FigTangOpt}-D1). \newline
With a $PAR$ value of 86.4, 86.8 and 85.9 for L2R, L1R and M2E (Table \ref{Quantification}, fifth column), the current densities of all three approaches are mainly oriented parallel to the target vector, leading to $CD_t$ values, i.e., current densities along the target direction, of \SI{0.019}, \SI{0.033} and \SI{0.061}{A m^{-2}}, resp. (Table \ref{Quantification}, fourth column). While the $CD_t$ values between L1R and M2E are thus less than a factor of 2 different, the averaged current density amplitudes in non-target regions is about 5.3 times higher when using the M2E approach (Table \ref{Quantification}, third column). The L1R optimized current flow field thus shows significantly higher focality and also slightly better parallelity to the target vector in comparison to the bipolar electrode montage M2E. However, if no multi-channel tDCS stimulator is available, the M2E approach provides an optimized bipolar electrode montage for a mainly tangential target vector.

\begin{table}[!t]
	\centering
  \renewcommand{\arraystretch}{1.3}
\caption{Quantification of optimized current density. The averaged current density in the target area ($CD_a$, second column), the averaged current density in non-target regions
(third column), the inner product of current density and target vector ($CD_t$, fourth column) and the percentage of current density that is oriented parallel to the target vector (PAR, fifth column) is displayed for different target vectors and methods (first column).} \label{Quantification}
	\begin{tabular}{ccccc}
			\hline
 			&\multicolumn{3}{c} {\bfseries  $\left[ A m^{-2} \right]$} & {\bfseries  $\left[ \% \right]$}\\
			{\bfseries Target } & {\bfseries $\frac{\int_{\Omega_t}\! |\boldsymbol B \boldsymbol I_S|\, \mathrm{d} \boldsymbol x}{|\Omega_t|}$} & {\bfseries $\frac{\int_{\Omega \setminus \Omega_t}\! |\boldsymbol B \boldsymbol I_S|\, \mathrm{d} \boldsymbol x}{|\Omega \setminus \Omega_t|}$}& {\bfseries $\frac{\int_{\Omega}\! \langle \boldsymbol B \boldsymbol I_S,\boldsymbol e\rangle \, \mathrm{d} \boldsymbol x}{|\Omega_t|}$} & {\bfseries $PAR = \frac{CD_t}{CD_a}$}  \\
			\hline \hline
tangential L2R & 0.022 & 0.00144 & 0.019 & 86.4\\
tangential L1R & 0.038 &  0.00151 & 0.033 & 86.8\\
tangential M2E & 0.071 & 0.0080 & 0.061 & 85.9\\ \hline
radial L2R & 0.026 &  0.00064 & 0.025 & 96.1\\
radial L1R & 0.045 &  0.00071 & 0.043 & 95.5\\
radial M2E & 0.063 & 0.0074 & 0.048 & 76.2\\ \hline
patch L2R & 0.025 & 0.00147 & 0.022 & 88.0\\
patch L1R & 0.037 & 0.00157 & 0.033 & 89.1 \\
patch M2E& 0.071 & 0.0080 & 0.062 & 87.3\\ \hline
deep L2R & 0.015 & 0.00345 & 0.013 & 86.7\\
deep L1R & 0.019 & 0.00249 & 0.018 & 94.7 \\
deep M2E & 0.052 & 0.01477 & 0.049 & 94.2\\
			\hline
	\end{tabular}
\end{table}

\begin{figure}[t!]
\begin{center}
\includegraphics*[width = \textwidth,keepaspectratio]{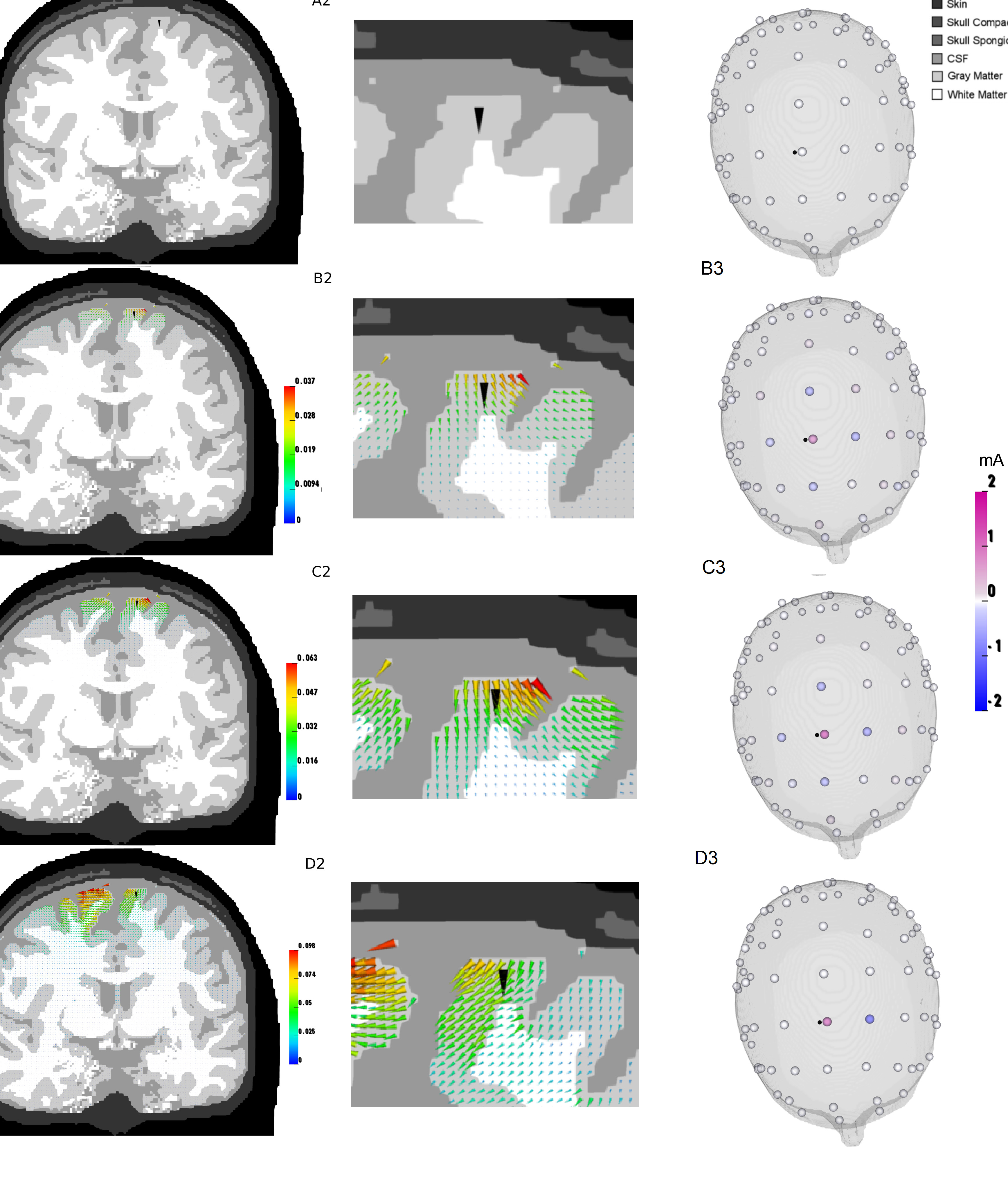}
\end{center}
\caption{A mainly radial target vector (Figs. -A1 and -A2): Figures -B -C and -D present optimization results for the L2R, L1R and M2E approach, respectively. \rev{The optimized current densities in the brain compartment (different scaling of rows), in a zoomed region of interest and the corresponding stimulation protocols (same scaling) are shown in Figures -1, -2 and -3, respectively.}}\label{FigRadOpt}
\end{figure}

\subsection{Mainly radial target vector}\label{radsingle}
\begin{figure}[t!]
\begin{center}
\includegraphics*[width = \textwidth,keepaspectratio]{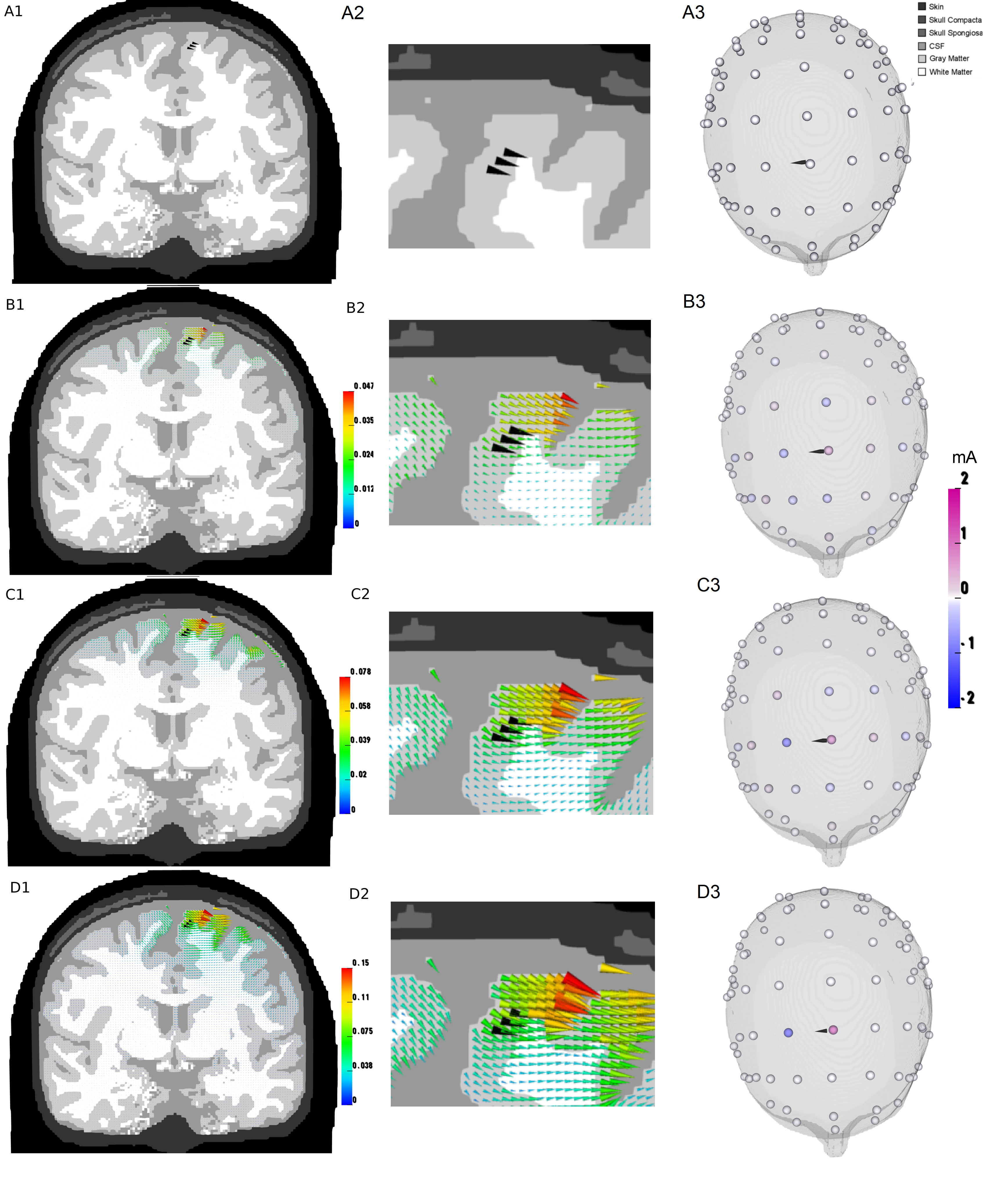}
\end{center}
\caption{An extended target region consisting of 3 tangential target vectors (Figs. -A1 and -A2):  Figures -B -C and -D present optimization results for the L2R, L1R and M2E approach, respectively. \rev{The optimized current densities in the brain compartment (different scaling of rows), in a zoomed region of interest and the corresponding stimulation protocols (same scaling) are shown in Figures -1, -2 and -3, respectively.}}\label{FigPatch}
\end{figure}\noindent
The mainly radially oriented target is shown in Figures \ref{FigRadOpt}-A1 and -A2. Figures \ref{FigRadOpt}-B1 and \ref{FigRadOpt}-C1 depict the optimized current densities for the L2R and L1R approaches, resp.. As shown in Table \ref{Quantification} (Column 2), with a value of \SI{0.045}{A m^{-2}}, the target current density  for the L1R approach is more than a factor of 1.7 times stronger than with the L2R approach (\SI{0.026}{A m^{-2}}). Non-target regions show only weak current densities  (Figures \ref{FigRadOpt}-B1 and -C1). With a value of \SI{0.063}{A m^{-2}}, the M2E approach yields the largest target intensity (Table \ref{Quantification}, second column). However, for this approach, the maximal current density in the brain does not occur in the target region, but on a neighboring gyrus in between the stimulating electrodes and in mainly tangential direction (Figure \ref{FigRadOpt}-D1). Moreover, the M2E optimized current density shows again an overall much lower focality than the L2R and L1R current flow fields. \newline
The L2R stimulation protocol (Figure \ref{FigRadOpt}-B2) consists of an anode above the target, surrounded by four cathodes and a ring of very weak positive currents at the second neighboring electrodes, a distribution which might be best described by a sinc-function. As can be seen in Figure \ref{FigRadOpt}-C2, the L1R stimulation protocol is mainly composed of a main positive current at the electrode above the target region and four return currents applied to the surrounding electrodes. \newline
The L2R and L1R optimized current flow fields show high directional agreement with the target vector $\boldsymbol e$ ($PAR$ value above  \SI{95}{\percent}), with the L2R slightly outperforming the L1R approach (Table \ref{Quantification}, fifth column). With a $PAR$ value of  only \SI{76.2}{\percent} the current flow field in the M2E model shows much less directional agreement with the target vector. This results in $CD_t$ values of 0.025, 0.043 and \SI{0.048}{A m^{-2}} for the L2R, L1R and M2E approaches, resp.. In order to obtain higher $PAR$ values for the M2E approach, i.e., higher current densities along the target direction, the distance between anode and cathode might be enlarged, in line with Dmochowski and colleagues \cite{Dmochowski2011} who reported that the optimal bipolar electrode configuration for a radial target vector consists of an electrode placed directly over the target with a distant return electrode. Another interesting bipolar electrode arrangement for a radial target vector might consist of an anode over the target and a cathodal ring around the anode.

\begin{figure}[t!]
\begin{center}
\includegraphics*[width = \textwidth,keepaspectratio]{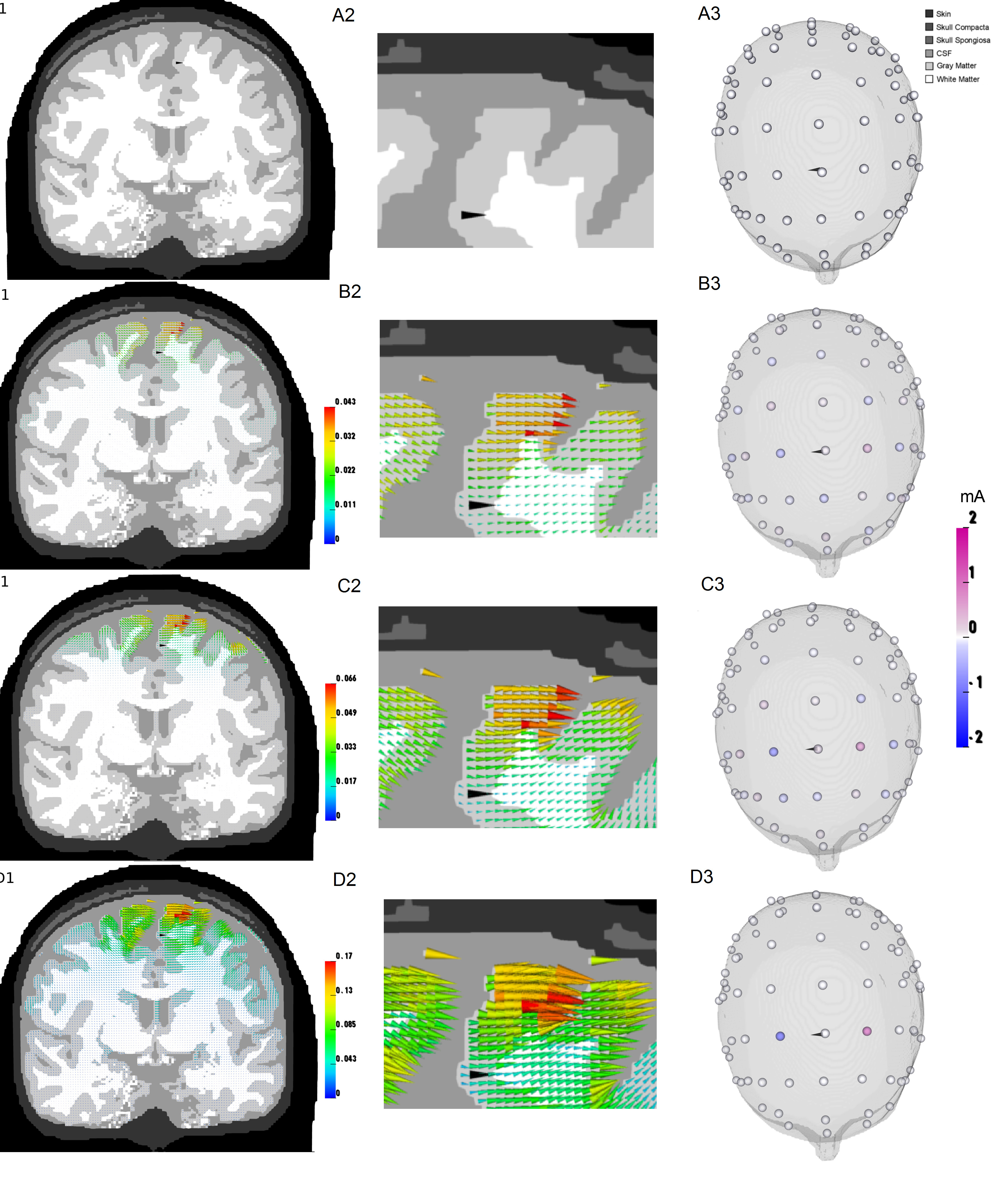}
\end{center}
\caption{A deeper target vector (Figs. -A1 and -A2): Figures -B -C and -D present optimization results for the L2R, L1R and M2E approach, respectively. \rev{The optimized current densities in the brain compartment (different scaling of rows), in a zoomed region of interest and the corresponding stimulation protocols (same scaling) are shown in Figures -1, -2 and -3, respectively.}}\label{FigDeep}
\end{figure}

\subsection{Extended mainly tangential target vector region}
In this section, an extended target area of \SI{3}{mm} $\times$ \SI{1}{mm} $\times$ \SI{1}{mm} is used for current density optimization. The target area is centered around the location of the superficial target vector that was also used in Section \ref{tangsingle} and the corresponding target vectors are selected to be mainly tangentially oriented (Figures \ref{FigPatch}-A1 and \ref{FigPatch}-A2). As can be seen in Figures \ref{FigPatch}-B1 and \ref{FigPatch}-C1, the optimized current flow fields of the L2R and L1R approaches show high focality. L2R and L1R yield an averaged target intensity of \SI{0.025}{A m^{-2}} and \SI{0.037}{A m^{-2}}, resp. (Table \ref{Quantification}, second column). Current density is not restricted to but focused to the target area, especially for the L2R approach (Fig. \ref{FigPatch}-B1).  In comparison to the superficial tangential target vector of Section \ref{tangsingle}, the L1R optimized averaged current density in the target area is decreased by about \SI{3}{\percent} (from \SI{0.038}{A m^{-2}} to \SI{0.037}{A m^{-2}}) and the optimized stimulation protocols are also very similar. Because the main two electrodes are taken from the L1R optimization, this implies that also the M2E stimulation protocol remains constant to the stimulation protocol from Section \ref{tangsingle}. In this way, the M2E approach yields an averaged target intensity of \SI{0.071}{A m^{-2}} (Table \ref{Quantification}, second column). \newline
With averaged $PAR$ values of 88.0, 89.1 and 87.3 (Table \ref{Quantification}, fifth column), the current densities are mainly oriented parallel to the target vectors with L1R performing best. The averaged current flow field intensity along the target direction is 0.022, 0.033 and \SI{0.062}{A m^{-2}} for the L2R, L1R and M2E approaches, resp. (Table \ref{Quantification}, fourth column). However, for M2E also non-target regions reach significant current densities, as clearly shown in Figure \ref{FigPatch}-D1. The averaged current density amplitudes in non-target regions is 0.00157 and \SI{0.0080}{A m^{-2}} for the L1R and M2E approaches (Table \ref{Quantification}, third column). The L1R optimized current flow field thus shows a factor of 5.1 higher focality in comparison to bipolar electrode montage M2E. However, the M2E approach provides an optimized bipolar electrode montage for an extended target area of tangential target vectors. 

\subsection{Deep and tangential target vector}
In the last simulation scenario, we investigate optimization for a deeper and mainly tangentially oriented target vector as shown in Figures \ref{FigDeep}-A1 and -A2. Figures \ref{FigDeep}-B1 and -C1 depict the optimized current density distributions when using L2R and L1R for optimization, resp.. For those approaches, the target current densities are 0.015 and \SI{0.019}{A m^{-2}}, resp.. With a value of \SI{0.052}{A m^{-2}}, which is more than 2.7 times the L1R value, the largest target intensity is, however, achieved with the M2E approach (Table \ref{Quantification}, second column).  
$CD_t$ values of 0.013, 0.018 and \SI{0.049}{A m^{-2}} lead to $PAR$ values of 86.7, 94.7 and 94.2 for the L2R, L1R and M2E approaches (Table \ref{Quantification}, fourth and fifth column). The target current densities are thus for all three approaches oriented mainly parallel to the target vector. \newline
The L2R and L1R stimulation protocols show high focality with mainly two active electrodes, while only weak compensating currents are injected at the neighboring electrodes (Figures  \ref{FigTangOpt}-B2 and -C2). Similar to Section \ref{tangsingle}, the compensating currents are stronger when using the L2R optimization procedure, leading to weaker target brain current densities as compared to the L1R stimulation. In order to enable current density to penetrate into deeper brain regions, the distance between the two main stimulating electrodes is larger when compared to the superficial mainly tangential target vector from Section \ref{tangsingle}, i.e., the electrode above the target region is not used for stimulation, while a more distant electrode is used as anode. \newline  
For all three approaches, strongest current density amplitudes in the brain compartment always occur at the CSF/brain boundary above the target region (Figures \ref{FigDeep}-B1,-C1 and -D1). This is due to the fact that the potential field $\nabla \Phi$ satisfies the maximum principle for harmonic functions which states that a non-constant function always attains its maximum at the boundary of the domain \cite[Theorem 14.1]{Gilbarg2001}. Nevertheless, in average over all non-target regions, with a value of \SI{0.01477}{A m^{-2}} for M2E, the L2R (\SI{0.00345}{A m^{-2}}) and L1R (\SI{0.00249}{A m^{-2}}) optimized current flow fields show a factor of about 4.3 and 5.9 times lower current densities, resp. (Table \ref{Quantification}, third column). Overall, L2R and L1R thus have a much higher focality, which can also easily be seen in Figures \ref{FigDeep}-B1,-C1 and -D1. However, if no multi-channel tDCS device is available, the M2E approach provides an optimized bipolar electrode arrangement for a deep and tangential target vector. \newline
\rev{The deep target region does not seem to be located in a very deep region of the brain. It gets obvious that the deeper the target vector is located, the higher the averaged maximal current density in the brain compartment, especially in more lateral brain regions. Due to the maximum principle it is thus not possible to target deep regions without stimulating more lateral brain areas. However, many important target regions, e.g. auditory, motor or visual cortex, are located rather laterally, so that it will be possible to target with significant field strength in many applications. Moreover, in many applications of brain stimulation it might also not matter, if non-target regions are also involved, because the experimental setup focuses on the target region, for example when examining the change in event-related potentials (ERP) in pre- and post- tCS stimulation ERP measurements.}

\section{Conclusion and Outlook}
A novel optimization approach for safe and well-targeted multi-channel transcranial direct current stimulation has been proposed. Existence of at least one minimizer 
has been proven for the proposed optimization methods. For discretization of the respective minimization problems the finite element method was employed and the existence of at least one minimizer to the discretized optimization problems have been shown. For numerical solution of the corresponding discretized problem we employed the alternating direction method of multipliers. A highly-realistic six-compartment head model with white matter anisotropy was generated and optimized current density distributions were calculated and evaluated for a mainly tangential and a mainly radial target vector at superficial locations, an extended target area and a deeper mainly tangential target vector. The numerical results revealed that, while all approaches fulfilled the patient safety constraint,  the optimized current flow fields show significantly higher focality and, with the exception of the L2R for the deep target, higher directional agreement to the target vector in comparison to standard bipolar electrode montages. The higher directional agreement is especially distinct for the radial target vector. In all test cases, because of a more widespread distribution of injected and extracted surface currents, the L2R optimization procedure $\mathbf{(\bar{P}^{\alpha,0}_{\epsilon})}$ led to relatively weak current densities in the brain compartment. The L1R optimized current density distribution along the target direction was in all test cases stronger than the L2R one and might thus be able to induce more significant stimulation effects. The stimulation will thus enhance cortical excitability especially in the target regions, while it will as good as possible prevent too strong excitability changes in non-target regions. \newline
We were able to demonstrate that the M2E approach provides optimized bipolar electrode montages as long as the target is  mainly tangentially oriented. For radial targets, the M2E approach was unsatisfactory, an optimal bipolar electrode configuration might then consist of a small electrode placed directly above the target region with a distant return electrode or a small electrode over the target encircled by a ring return electrode, as proposed in \cite{Dmochowski2011}. \newline
A further application for the optimization method is transcranial magnetic stimulation (TMS). TMS uses externally generated magnetic fields to induce electrical currents to the underlying
brain tissue \cite{Gomez2013}. Because there is no safety limit for the total currents applied to the stimulating coils but a safety-threshold for painful muscle twitching \cite{Gomez2013}, the 
constrained optimization problem for multi-coil TMS is given as 
\begin{eqnarray*}
  \mathbf{(P_{TMS})} \qquad 
&- \int_{\Omega_t}\! \langle \sigma \nabla \Phi, \boldsymbol e \rangle \, \mathrm{d} \boldsymbol x& \rightarrow \min \\ 
& \text{subject to } &
 \omega | \sigma \nabla \Phi | \leq E_M \\ 
&& \nabla \cdot \sigma \nabla \Phi  =  - \nabla \cdot \sigma \frac{\partial \boldsymbol A(\boldsymbol x,t)}{\partial t} \quad \quad in \; \Omega \\
&& \langle \sigma \nabla \Phi, \boldsymbol n \rangle = - \langle \sigma \frac{\partial \boldsymbol A(\boldsymbol x,t)}{\partial t}, \boldsymbol n \rangle \quad on \;  \Gamma \\
 && \Phi = 0 \qquad \qquad \qquad \qquad \qquad \quad on \;\ \Gamma_D
\end{eqnarray*}
with $\boldsymbol A(\boldsymbol x,t)$ being the time-dependent magnetic vector potential and $E_M$ = \SI{450}{V m^{-1}} being the threshold for painful muscle twitching \cite{Gomez2013}. By designing the changes in the magnetic vector potential one can consider $\frac{\partial \boldsymbol A(\boldsymbol x,t)}{\partial t}$ as the optimization variables, respectively some parameters on which it depends linearly. The existence of at least one minimizer to the constrained optimization problem for TMS directly follows with similar arguments as in Theorem \ref{minQE}.\newline
Because the optimization method can be applied for both brain stimulation modalities, a combined tDCS and TMS optimization might outperform single modality tDCS or TMS optimizations, similar to what was shown for electro- (EEG) and magnetoencephalography (MEG) \cite{Aydin2014, Aydin2015}. While tCS is able to stimulate a radially oriented target, TMS is mainly not (like MEG is hardly able to detect radial sources \cite{Aydin2014, Aydin2015}). Possible applications of combined tDCS and TMS multi-channel and multi-coil optimization might thus be an improved stimulation of target regions containing both radial and tangential orientations or of deeper target regions. In order to induce  action potentials in deeper target regions, the induced current density should exceed the threshold for neuronal depolarization of \SI{150}{V m^{-1}} \cite{Gomez2013}. On the other hand, the threshold for painful muscle twitching of \SI{450}{V m^{-1}} must be kept \cite{Gomez2013}. The combination of the optimized tDCS and TMS current density fields might lead to higher current densities in the target and simultaneously reduced current density amplitudes in non-target regions. \newline
\rev{While a thorough mathematical analysis of our novel multi-array tDCS optimization method was derived and results for different target regions were presented, besides the first promising results presented in \cite{Homoelle2016}, we did not yet further compare our method to the existing approaches in the literature such as, e.g., \cite{Dmochowski2011, Sadleir2012, Im2008, Ruffini2014}. Such a comparison is one of our future research goals.}

\vspace{0.5cm}
\noindent
\textbf{Acknowledgements}
\noindent 
SW and CHW were supported by the priority program SPP1665 of the German Research Foundation, project WO1425/5-1. MB acknowledges support by ERC via Grant EU FP 7 - ERC Consolidator Grant 615216 LifeInverse and by the German Science Foundation DFG via EXC 1003 Cells in Motion Cluster of Excellence, M\"unster, Germany.


\begin{thebibliography}{1}

\bibitem{Agsten2015}
{\sc B. Agsten} {\em Comparing the complete and the point electrode model for combining tCS and EEG}, 
Master thesis, University of Münster, 2015

\bibitem{Agsten2016}
{\sc B. Agsten, S. Wagner, S. Pursiainen and C.H. Wolters} {\em Advanced boundary electrode modeling for tES and parallel tES/EEG},
submitted for publication.
  
\bibitem{Aydin2014}   
{\sc {\"U}. Aydin, J. Vorwerk, P. K{\"u}pper, M. Heers, H. Kugel, A. Galka, L. Hamid, J. Wellmer, C. Kellinghaus, J. S. Rampp and C. H. Wolters} {\em Combining EEG and MEG for the reconstruction of epileptic activity using a calibrated realistic volume conductor model}, PLoS ONE, 9(3) (2015),  e93154, doi:10.1371/journal.pone.0093154

\bibitem{Aydin2015}   
{\sc {\"U}. Aydin, J. Vorwerk, M. D{\"u}mpelmann, P. K{\"u}pper, H. Kugel, M. Heers, J. Wellmer, C. Kellinghaus, J. Haueisen, S.  Rampp, H. Stefan and C. H. Wolters} {\em Combined EEG/MEG Can Outperform Single Modality EEG or MEG Source Reconstruction in Presurgical Epilepsy Diagnosis}, PLoS ONE, 10(3) (2015), e0118753

\bibitem{Bindman1962}
{\sc L. J. Bindman, O. C. Lippold and J. W. Redfearn} {\em Long-lasting changes in the level of the electrical activity of the cerebral cortex produced by polarizing currents}
Nature, 10(196) (1962), p. 584--585

\bibitem{Boggio2007}
{\sc P. S. Boggio, F. Bermpohl, A. O. Vergara, A. L. Muniz, F. H. Nahas, P. B. Leme, S. P. Rigonatti and F. Fregni} {\em Go-no-go task performance improvement after anodal transcranial DC 
stimulation of the left dorsolateral prefrontal cortex in major depression}, J Affect Disord, 101 (2007), p. 91--98

\bibitem{Boyd2011}
{\sc S. Boyd, N. Parikh, E. Chu, B. Peleato and J. Eckstein} {\em Distributed optimization and statistical learning via the alternating direction method of multipliers},
Foundations and Trends in Machine Learning, 3(1) (2011), p. 1--122

\bibitem{Creutzfeldt1962}
{\sc O. D. Creutzfeldt, G. H. Fromm and H. Kapp} {\em Influence of transcranial d-c currents on cortical neuronal activity} 
Exp Neurol, 5 (1962), pp.436--452 

\bibitem{Dannhauer2012}
{\sc M. Dannhauer, D. Brooks, D. Tucker and R. MacLeod} {\em A pipeline for the Simulation of Transcranial Direct Current Stimulation for Realistic Human Head Models using SCIRun/BioMesh3D},
Conf Proc IEEE Eng Med Biol Soc. 2012; 10.1109/EMBC.2012.6347236 (2012)

\bibitem{Dmochowski2011}
{\sc J. P. Dmochowski, A. Datta, M. Bikson, Y. Su and L. C. Parra} {\em Optimized multi-electrode stimulation increases focality and intensity at target}, 
J Neural Eng., 8 (2011), p. 1--16 

\bibitem{Dmochowski2013}
{\sc J. P. Dmochowski, A. Datta, Y. Huang, J. D. Richardson, M. Bikson, J. Fridriksson and L. C. Parra} {\em Targeted transcranial direct current stimulation for rehabilitation after stroke}
NeuroImage, 75 (2013), p. 12--19

\bibitem{Eichelbaum2014}
{\sc Eichelbaum, S., Dannhauer, M., Hlawitschka, M ., Brooks, D.,  Knösche, T.R., Scheuermann, G.} {\em Visualizing simulated electrical fields from electroencephalography and transcranial electric brain stimulation: A comparative evaluation} 
NeuroImage, 101 (2014)  p. 513-530

\bibitem{Ferrucci2008}
{\sc R. Ferrucci, F. Mameli, I. Guidi, S. Mrakic-Sposta, M. Vergari, S. Marceglia, F. Cogiamanian, S. Barbiere, E. Scarpini and A. Priori} {\em Transcranial direct current stimulation improves
recognition memory in Alzheimer disease} Neurology, 71 (2007), p.493--498

\bibitem{Fregni2006} 
{\sc F. Fregni, S. Thome-Souza, M. A. Nitsche, S. D. Freedman, K. D. Valente and A. Pascual-Leone} {\em A controlled clinical trial of cathodal DC polarization in patients with refractory epilepsy}
Epilepsia, 47 (2006), p. 335--342

\bibitem{Gilbarg2001}
{\sc D. Gilbarg and N. S. Trudinger} {\em Elliptic partial differential equations of second order} Vol. 224 Springer (2001).

\bibitem{Gomez2013}
{\sc L. Gomez, F. Cajko, L. Hernandez-Garcia, A. Grbic and E. Michielsson} {\em Numerical analysis and design of single-source multicoil TMS for deep and focused brain stimulation},
IEEE Trans Biomed Eng, 60(10) (2013), p. 2771--2782

\bibitem{Hartl1995}
{\sc R. F. Hartl, S. P. Sethi and R. G. Vickson} {\em A survey of the maximum principles for optimal control problems with state constraints}, Siam Review, 37(2) (1995), p. 181--218

\bibitem{Haueisen2002}
\sc{J. Haueisen, D.S. Tuch, C. Ramon, P.H. Schimpf, V. J. Wedeen, J. S. George, J. W. Bellveau} {\em The influence of brain tissue anisotropy on human EEG and MEG} 
NeuroImage 15 (1), pp. 159–166

\bibitem{Herzog2012}
{\sc R. Herzog, G. Stadler and G. Wachsmuth} {\em Directional sparsity in optimal control of partial differential equations}, Siam J. Control O Ptim, 50(2) (2012), p. 943--963

\bibitem{Homoelle2016}
{\sc Hom{\"o}lle, S.} {\em Comparison of optimization approaches in high-definition transcranial current stimulation in the mammalian brain} 
Master thesis, University of Münster, 2016.

\bibitem{Im2008}
{\sc C. H. Im, H. H. Jung, J. D. Choi, S. Y. Lee and K. Y. Jung} {\em Determination of optimal electrode positions for transcranial direct current stimulation (tDCS)}
Phys Med Biol, 53 (2008), p. N219--N225

\bibitem{LionsBook}
{\sc J. L. Lions} {\em Optimal control of systems governed by partial differential equations} 
Dunod and Gauthier-Villars, Paris (1968)

\bibitem{Nitsche2000}
{\sc M. A. Nitsche and W Paulus} {\em Excitability changes induced in the human motor cortex by weak transcranial direct current stimulation}, J Physiol, 527 (2000), pp. 633--639

\bibitem{Nitsche2003}
{\sc M.A. Nitsche, D. Liebetanz, A. Antal, N. Lang, F. Tergau and W. Paulus}{\em Modulation of cortical excitability by weak direct current stimulation–technical, safety and functional aspects}, Suppl Clin Neurophysiol, 56(3), pp. 255-276

\bibitem{wollner2}
{\sc C. Ortner, W. Wollner} {\em A priori error estimates for optimal control problems with pointwise constraints on the gradient of the state}, 
Numer. Math.  118 (2011), 587-600.

\bibitem{Pursiainen2012}   
{\sc S. Pursiainen, F. Lucka and C.H. Wolters} {\em Complete electrode model in EEG: Relationship and differences to the point electrode model},
Phys.Med.Biol., 57, pp.999-1017, (2012).

\bibitem{Ramon2004}
{\sc C. Ramon, P. Schimpf, J. Haueisen, M. Holmes, A. Ishimaru}{\em Role of soft bone, CSF and gray matter in EEG simulations},
Brain Topogr, 16, pp. 245–248 

\bibitem{Ruffini2014}
{\sc G. Ruffini, M. D. Fox, O. Ripolles, P. C. Miranda, A. Pascual-Leone} {\em Optimization of multifocal transcranial current stimulation for weighted cortical pattern targeting from realistic modeling of electric fields},
NeuroImage 89, pp. 216-225, (2014).  

\bibitem{Schiela2011}
{\sc A. Schiela and W. Wollner} {\em Barrier methods for optimal control problems with convex nonlinear gradient state constraints}, Siam J. Optim., 21(1) (2011), p. 269--286 

\bibitem{Sadleir2012}
{\sc R. J. Sadleir, T. D. Vannorsdall, D. J. Schretlen and B. Gordon} {\em Target optimization in transcranial direct current stimulation}
Front Psychiatry, 3 (2012), p. 90

\bibitem{Tuch2001}
{\sc D. S. Tuch, V. J. Wedeen, A. M. Dale, J. S. George and J. W. Belliveau} {\em Conductivity tensor mapping of the human brain using diffusion tensor MRI},
Proc Natl Acad Sci U.S.A., (1998). p. 11697--11701

\bibitem{Wagner2014}
{\sc S. Wagner, S. M. Rampersad, U. Aydin, J. Vorwerk, T. F. Oostendorp, T. Neuling, C. S. Herrmann, D. F. Stegeman and C. H. Wolters} {\em Investigation of tDCS volume conduction
effects in a highly realistic head model}, J. Neural Eng., 11 (2014), p. 016002

\bibitem{wollner1}{\sc W. Wollner} {\em Optimal control of elliptic equations with pointwise constraints on the gradient of the state in nonsmooth polygonal domains}, 
SIAM J. Control Optim. 50 (2012), 2117-2129.

\end{thebibliography}
\end{document}